\newcommand{\tld}{\widetilde }
\newcommand{\DuBois}[1]{{\underline \Omega {}^0_{#1}}}
\newcommand{\FullDuBois}[1]{{\underline \Omega {}^{\mydot}_{#1}}}
\DeclareSymbolFont{rsfs}{OMS}{rsfs}{m}{n}
\DeclareSymbolFontAlphabet{\scr}{rsfs}
\newcommand{\sI}{\scr{I}}
\renewcommand{\O}{\mbox{$\mathcal{O}$}}
\newcommand{\mydot}{\centerdot}
\newcommand{\qis}{\simeq_{\text{qis}}}
\newcommand{\bQ}{\mathbb{Q}}
\newcommand{\ba}{\mathfrak{a}}
\newcommand{\mJ}{\mathcal{J}}
\newcommand{\bC}{\mathbb{C}}
\newcommand{\bH}{\mathbb{H}}
\newcommand{\sC}{\scr{C}}
\newcommand{\nsubset}{\not\subset}
\DeclareMathOperator{\Gr}{{Gr}}
\DeclareMathOperator{\an}{{an}}
\DeclareMathOperator{\Supp}{{Supp}}
\DeclareMathOperator{\coherent}{{coh}}
\DeclareMathOperator{\red}{red}
\DeclareMathOperator{\Spec}{{Spec}}
\newtheorem{theorem}{Theorem}[section]
\newtheorem{lemma}[theorem]{Lemma}
\newtheorem{proposition}[theorem]{Proposition}
\newtheorem{corollary}[theorem]{Corollary}
\theoremstyle{definition}
\newtheorem{definition}[theorem]{Definition}
\theoremstyle{remark}
\newtheorem{remark}[theorem]{Remark}
\newtheorem{question}[theorem]{Question}
\begin{document}

\title{A Simple Characterization of Du Bois Singularities}
\author{Karl Schwede}
\email{kschwede@umich.edu}
\begin{abstract}
We prove the following theorem characterizing Du Bois singularities.  Suppose that $Y$ is smooth and that $X$ is a reduced closed subscheme.  Let $\pi : \tld Y \rightarrow Y$ be a log resolution of $X$ in $Y$ that is an isomorphism outside of $X$.  If $E$ is the reduced pre-image of $X$ in $\tld Y$, then $X$ has Du Bois singularities if and only if the natural map $\O_X \rightarrow R \pi_* \O_E$ is a quasi-isomorphism.  We also deduce Koll\'ar's conjecture that log canonical singularities are Du Bois in the special case of a local complete intersection and prove other results related to adjunction.
\end{abstract}
\keywords{singularities, rational, log canonical, Du Bois, adjunction, multiplier ideals}
\address{Department of Mathematics\\ University of Michigan\\ Ann Arbor, Michigan, 48109}
\subjclass[2000]{14B05}
\maketitle

\section{Introduction and Background}
In this paper, we prove a simple new characterization of Du Bois singularities.  Inspired by this characterization, we deduce several new theorems, including results related to adjunction, and progress towards a conjecture of Koll\'ar.

Du Bois singularities were initially defined by Steenbrink as a setting where certain aspects of Hodge theory for smooth varieties still hold; see \cite{SteenbrinkCohomologicallyInsignificant}, \cite{SteenBrinkDuBoisReview} and \cite{DuBoisMain}.  They are defined by the cohomology of a complex which is difficult to understand since it requires the computation of resolutions of singularities for several varieties (specifically a simplicial or cubic hyperresolution is required; see  \cite{DeligneHodgeIII}, \cite{GNPP}, or \cite{CarlsonPolyhedral}).  Our new characterization of Du Bois singularities requires only a single resolution.  Our main result is:
\vskip 10pt
\hskip -16pt {\textsc{Theorem \ref{EasyDuBoisCriterion}.}}
{\it
Let $X$ be a reduced separated scheme of finite type over a field of characteristic zero.  Embed $X$ in a smooth scheme $Y$ and let $\pi : \tld Y \rightarrow Y$ be a log resolution of $X$ in $Y$ that is an isomorphism outside of $X$.  If $E$ is the reduced pre-image of $X$ in $\tld Y$, then $X$ has Du Bois singularities if and only if the natural map $\O_X \rightarrow R \pi_* \O_E$ is a quasi-isomorphism.
}\vskip 10pt
This result is actually a corollary of a stronger theorem (see Theorem \ref{AlternateDuBoisCharacterization}) in which we prove that $R \pi_* \O_E$ is quasi-isomorphic to $\DuBois{X}$, the complex used to determine whether or not $X$ has Du Bois singularities.
In addition to being much simpler, this new characterization unambiguously places Du Bois singularities among the pantheon of singularities of birational geometry.  The hypotheses of Theorem \ref{EasyDuBoisCriterion} can also be weakened in several ways; see Theorem \ref{AlternateDuBoisCharacterization} and Theorem \ref{DoubleLogResolution}.

Despite their complicated definition, Du Bois singularities were already known to be closely related to the singularities of birational geometry.  For example, it was conjectured by Koll\'ar that log canonical singularities are Du Bois; see \cite[1.13]{KollarFlipsAndAbundance}.  Conversely, it is known that normal quasi-Gorenstein Du Bois singularities are log canonical; see \cite[3.6]{KovacsDuBoisLC1}.  Furthermore, rational singularities are Du Bois, as shown in \cite{KovacsDuBoisLC1} (also see \cite{SteenbrinkMixed} for an alternate proof of the case of an isolated singularity).  In summary, Du Bois singularities fit naturally into the hierarchy of singularities in birational geometry as pictured in the following diagram.
\[
\xymatrix{
\text{Log Terminal Singularities} \ar@{=>}[r] \ar@{=>}[d] & \text{Rational Singularities} \ar@{=>}[d] \ar_{\text{+ Gorenstein}}@/_2pc/@{=>}[l] \\
\text{Log Canonical Singularities} \ar^-{\text{conj}}@2{.>}[r] & \text{Du Bois Singularities} \ar^{\text{+ quasi-Gorenstein \& normal}}@/^2pc/@{=>}[l]\\
& \\
}
\]
We prove Koll\'ar's conjecture that log canonical singularities are Du Bois in the case of a local complete intersection; see Corollary \ref{LCHypersurfaceImpliesDuBois}.
Previous progress towards this conjecture was made by Ishii for isolated singularities (see \cite[2.4]{IshiiIsolatedQGorenstein}), and by Kov\'acs for Cohen-Macaulay singularities assuming the dimension of the singular locus is ``not too big'' (see \cite{KovacsDuBoisLC1} and \cite{KovacsDuBoisLC2}).

Because our new criterion for Du Bois singularities requires an embedding into an ambient scheme, it is natural to look for consequences related to adjunction.  The following result is inspired by our characterization.
\vskip 10pt
\hskip -16pt {\textsc{Theorem \ref{MultiplierIdealsAreDuBois}.}}
{\it
Suppose that $Y$ is a scheme with Kawamata log terminal singularities and that $\sI$ is an ideal sheaf on $Y$.  If the pair $(Y, \sI^r)$ is log canonical for some positive rational number $r$, then the multiplier ideal associated to the pair $(Y, \sI^r)$ cuts out a scheme with Du Bois singularities.
}
\vskip 10pt
Using similar methods, we also recover an alternate proof of a special case of Kawamata's subadjunction theorem \cite{KawamataSubadjunction2}; see Theorem \ref{MultiplierIdealsAreRational}.

We also prove a new theorem linking Du Bois and rational singularities, analogous to results of Koll\'ar and Shepherd-Barron \cite[5.1]{KollarShepherdBarron}, Karu \cite[2.5]{KaruBoundedness}, and Watanabe and Fedder \cite[2.13]{FedderWatanabe}.
\vskip 10pt
\hskip -16pt {\textsc{Theorem \ref{DuBoisImpliesRational}.}}
{\it
Suppose $X$ is a reduced scheme of finite type over a field of characteristic zero, $H$ is a Cartier divisor that has Du Bois singularities, and $X - H$ is smooth.  Then $X$ has rational singularities (in particular, it is Cohen-Macaulay).
}
\vskip 10pt
All schemes we consider will be separated and of finite type over a field of characteristic zero.  Typically they will also be reduced.  In particular, they may be non-normal and have several irreducible components of possibly different dimensions.  A smooth scheme will always be assumed to be equidimensional.

If $Y$ is a scheme, we will often work in the derived category of $\O_Y$-modules, denoted by $D(Y)$.  $D^b_{\coherent}(Y)$ will denote the derived category of bounded complexes of $\O_Y$ modules with coherent cohomology; see \cite{HartshorneResidues}.  In the setting of the derived category, we will write $F^{\mydot} \qis G^{\mydot}$ if $F^{\mydot}$ and $G^{\mydot}$ are quasi-isomorphic.

\vskip 12pt
\hskip -12pt
{\it Acknowledgements: }
Most of the results that appear in this paper originally appeared in my doctoral dissertation at the University of Washington which was directed by S\'andor Kov\'acs.  I would like to thank Howard Thompson for suggesting the use of marked ideals in Lemma \ref{HowardThompsonSuggestion}, Karen Smith and S\'andor Kov\'acs for their careful readings of this paper, and Mircea Musta\c{t}\v{a} and Shunsuke Takagi for several valuable discussions.

\section{Log Resolutions and Singularities of Pairs}

Let $Y$ and $\tld Y$ be reduced separated schemes of essentially finite type over a field $k$ of characteristic zero.  A \emph{birational morphism} $\pi : \tld Y \rightarrow Y$ is a morphism over $k$ that induces a bijection of generic points and induces a set of isomorphisms of residue fields at those points.

\begin{definition}
A \emph{resolution} of $Y$ is a proper birational morphism $\pi : \tld Y \rightarrow Y$ of schemes over $k$ such that $\tld Y$ is smooth over $k$.  A resolution is called a \emph{good resolution} if the exceptional set $E \subset \tld Y$ is a divisor with simple normal crossings.  If $\sI$ is an ideal sheaf on $Y$, a \emph{log resolution} of $\sI$ is a good resolution $\pi: \tld Y \rightarrow Y$ of $Y$ such that $\sI \O_{\tld Y} = \O_{\tld Y}(-G)$ is an invertible sheaf and such that $E \cup \Supp(G)$ is a simple normal crossings divisor.  Typically when performing a log resolution, $Y$ will be smooth or have some ``nice'' singularities.  A \emph{strong log resolution} of $\sI$ will be a log resolution $\pi : \tld Y \rightarrow Y$ of $\sI$ that is also an isomorphism outside the scheme $V(\sI)$ defined by $\sI$.
\end{definition}

\begin{remark}
Resolutions and log resolutions exist for schemes of finite type over a field of characteristic zero, \cite{HironakaResolution}.  A necessary and sufficient condition for the existence of strong log resolutions is that $Y - V(\sI)$ is smooth.
\end{remark}

We are now in a position to define rational singularities.

\begin{definition}
Suppose that $\pi : \tld Y \rightarrow Y$ is a resolution.  We say that $Y$ has \emph{rational singularities} if the natural map $\O_Y \rightarrow R \pi_* \O_{\tld Y}$ is a quasi-isomorphism.
\end{definition}

We now introduce the terminology necessary to define log terminal and log canonical singularities.

Let $Y$ be a $\bQ$-Gorenstein normal irreducible scheme of finite type over a field of characteristic zero, and let $X = \sum_{i=1}^k t_i X_i$ be a formal combination of closed subschemes $X_i \subsetneq Y$, defined by ideal sheaves $\ba_i \subset \O_X$ with $t_i \in \bQ$.  In our context we will always assume the $t_i$ are non-negative.  The notation $(Y, X)$ will be used to denote a pair of objects as above.  If $\ba$ is an ideal, we will also use $(Y, \ba^t)$ to denote $(Y, t V(\ba))$.  Sometimes, when $\ba$ is a reduced ideal and $X = V(\ba)$, we will use the notation $(Y, (r)X)$ to denote $(Y, V(\ba^{(r)}))$; that is, we take symbolic power instead of ordinary power.

Suppose $\pi : \tld Y \rightarrow Y$ is a birational morphism from a normal (typically smooth) scheme $\tld Y$ such that all ideal sheaves $\ba_i \O_{\tld Y} = \O_{\tld Y}(-G_i)$ are invertible.  Now we define the relative canonical divisor, so let $K_Y$ and $K_{\tld Y}$ denote canonical divisors of $Y$ and $\tld Y$.  The \emph{relative canonical divisor}, denoted by $K_{\tld Y / Y}$ is a $\bQ$-divisor supported on the exceptional locus of $\pi$.  To construct it explicitly, suppose that $n K_Y$ is Cartier.  Then we can pull back $n K_Y$ locally to obtain $\pi^*(n K_Y)$, a Cartier divisor on $\tld Y$.  The relative canonical divisor $K_{\tld Y / Y}$ is then defined as the unique divisor numerically equivalent to $K_{\tld Y} - (1/n) \pi^*(n K_Y)$ supported on the exceptional locus of $\pi$.  Finally we write
\[
K_{\tld Y/Y} - \sum_{i=1}^k t_i G_i = \sum_E a(E,Y,X) E
\]
where the $E$'s are arbitrary prime divisors on $\tld Y$.  Note that if the $t_i$ are integers, then $\ba_i^t \O_{\tld Y} = \O_{\tld Y}(-t_i G_i)$.  The rational number $a(E,Y,X)$ is called the \emph{discrepancy of the pair $(Y,X)$ along $E$}.

We now discuss singularities associated to pairs $(Y,X)$ where $X$ has arbitrary codimension in $Y$; see \cite{MustataSingularitiesOfPairs} and \cite{TakagiInversion}.  These definitions agree with the classical notions when $Y$ is smooth and $X$ is a divisor; see for example \cite{KollarMori}.

\begin{definition}
\label{SingularitiesInPairs}
We define a pair $(Y,X)$ to be \emph{log canonical (or lc)} if $a(E,Y,X) \geq -1$ for all divisors $E$ over $(Y,X)$ (where $E$ runs through all irreducible divisors of all birational morphisms such that the $\ba_i \O_{\tld Y}$ are invertible).  We define such a pair to be \emph{Kawamata log terminal (or klt)} if $a(E, Y, X) > -1$ for all $E$ as above.  We define such a pair to be \emph{purely log terminal (or plt)} if $a(E, Y, X) \geq -1$ for divisors $E$, dominating a component of $X$, and $a(E, Y, X) > -1$ for all other divisors.
\end{definition}

We will also use multiplier ideals and local vanishing for multiplier ideals; see \cite[9]{LazarsfeldPositivity2}.

\begin{definition}
\label{MultiplierIdealDefinition}
\numberwithin{equation}{theorem}
Let $Y$ be a $\bQ$-Gorenstein normal irreducible scheme of finite type over a field of characteristic zero.  Let $\sI$ be an ideal sheaf and $r$ a non-negative rational number.  Let $\pi : \tld Y \rightarrow Y$ be a log resolution of $\sI$ in $Y$ and let $G$ denote the divisor on $\tld Y$ such that $\O_{\tld Y}(-G) = \sI \O_{\tld Y}$.  We define the multiplier ideal of $\sI^r$, written as $\mJ(\sI^r)$, to be
\[
\pi_* \O_{\tld Y}(\lceil K_{\tld Y / Y} - r G \rceil).
\]
\end{definition}

Local vanishing for multiplier ideals roughly states that the higher direct images associated to the multiplier ideal are zero, or more specifically

\begin{theorem} [{\cite[9.4]{LazarsfeldPositivity2}}]
\label{MultiplierIdealVanishing}
Under the previous hypotheses, we have
\[
R^j \pi_* \O_{\tld Y}(\lceil K_{\tld Y /Y} - r G \rceil) = 0 \text{  for } j > 0.
\]
\end{theorem}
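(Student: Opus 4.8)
The plan is to deduce the vanishing from the relative Kawamata--Viehweg vanishing theorem, used in the following shape: if $f : V \to W$ is a projective birational morphism with $V$ smooth, $M$ is an integral divisor on $V$, $B$ is an $f$-nef and $f$-big $\bQ$-divisor, and $\Delta$ is an effective $\bQ$-divisor with simple normal crossings support and $\lfloor \Delta \rfloor = 0$ with $M$ numerically equivalent to $K_V + B + \Delta$ over $W$, then $R^j f_* \O_V(M) = 0$ for $j > 0$ (see \cite[2.68]{KollarMori} for the absolute formulation and \cite{LazarsfeldPositivity2} for the relative version). So the entire task is to put $N := \lceil K_{\tld Y/Y} - rG \rceil$ into this form.

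First I would isolate the round-up error $\Delta := N - (K_{\tld Y/Y} - rG)$. By definition $\Delta$ is effective with every coefficient in $[0,1)$, so $\lfloor \Delta \rfloor = 0$; and it is supported inside $E \cup \Supp(G)$, since $K_{\tld Y/Y}$ is supported on the exceptional divisor $E$ and $\Supp(G) \subseteq \pi^{-1}(V(\sI))$. Because $\pi$ is a log resolution of $\sI$, that union is a simple normal crossings divisor, so $\Delta$ has the properties required of the fractional part.

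Next I would use the defining property of the relative canonical divisor: if $nK_Y$ is Cartier, then by construction $K_{\tld Y/Y}$ is numerically equivalent to $K_{\tld Y} - (1/n)\pi^*(nK_Y)$, and $(1/n)\pi^*(nK_Y)$ is numerically trivial over $Y$. Hence
\[
N \;=\; (K_{\tld Y/Y} - rG) + \Delta \;\equiv\; K_{\tld Y} + (-rG) + \Delta \qquad (\text{over }Y),
\]
so $B := -rG$ is the candidate nef-and-big divisor. Relative nefness of $-rG$ holds because $\sI\O_{\tld Y} = \O_{\tld Y}(-G)$ is, locally over $Y$, generated by the pull-backs of generators of $\sI$, hence relatively globally generated, hence $\pi$-nef; since $r \geq 0$, $-rG$ is $\pi$-nef as well. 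Relative bigness is automatic because $\pi$ is birational, so the restriction of $-rG$ to the generic fibre of $\pi$ (a point) is big. Applying the relative Kawamata--Viehweg vanishing theorem with $M = N$, $B = -rG$, and this $\Delta$ yields $R^j\pi_*\O_{\tld Y}(N) = 0$ for all $j > 0$, which is the assertion.

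The substance of the argument sits entirely inside the cited vanishing theorem; the only thing that requires care is the divisor bookkeeping, and within that the single subtle point is that $Y$ is merely $\bQ$-Gorenstein rather than Gorenstein. That is exactly what allows $K_{\tld Y/Y}$ to be non-integral, which is why the round-up appears in the statement and why the decomposition above must be phrased with numerical equivalence over $Y$ rather than linear equivalence. (When $Y$ happens to be smooth, $K_{\tld Y/Y}$ is an effective integral divisor and $\Delta$ is simply the ordinary fractional part of $rG$, but the structure of the proof is unchanged.)
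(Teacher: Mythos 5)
Your argument is correct. The paper itself gives no proof of this statement---it is quoted directly from \cite[9.4]{LazarsfeldPositivity2}---and your derivation is precisely the standard one found there: write $\lceil K_{\tld Y/Y}-rG\rceil = K_{\tld Y} + \Delta + (-rG) - \tfrac{1}{n}\pi^*(nK_Y)$ up to relative numerical equivalence, with $\Delta$ the fractional round-up supported on the simple normal crossings divisor $E\cup\Supp(G)$ and $\lfloor\Delta\rfloor=0$, note that $-rG$ is $\pi$-nef because $\sI\O_{\tld Y}$ is relatively globally generated and $\pi$-big because $\pi$ is birational, and apply relative Kawamata--Viehweg vanishing. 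All the bookkeeping you flag (the $\bQ$-Gorenstein source of the round-up, the SNC support of $\Delta$ coming from the definition of log resolution) is handled correctly.
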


\section{Relevant Properties of Du Bois Singularities}
Let $X$ be a reduced and separated scheme over a field of characteristic zero.  Associated to $X$ is a filtered complex in the derived category of abelian sheaves on $X$, denoted by $(\FullDuBois{X}, F)$ which satisfies certain properties; see \cite{DuBoisMain}.  One of these properties is that the graded pieces associated to the filtration can be viewed as objects of $D^{b}_{\coherent}(X)$.  This filtered complex is unique up to filtered quasi-isomorphism which in particular implies the graded pieces are unique up to quasi-isomorphism in $D^{b}_{\coherent}(X)$.  One constructs this filtered complex by using a diagram of schemes called a (simplicial or cubic) hyperresolution $X_{\mydot} \rightarrow X$; see \cite{DuBoisMain}, \cite{GNPP}, \cite{SteenbrinkgVanishing}, and \cite{CarlsonPolyhedral}.  The fact that  $(\FullDuBois{X}, F)$ is independent of choice of hyperresolution is typically proved directly only over $\bC$, see for example \cite[V, 3.7]{GNPP}; however, the Lefschetz principle and certain base change theorems for hyperresolutions let one extend this result to any field of characteristic zero.  See \cite[III, 1.16]{GNPP} for additional discussion.

Let us sketch one construction of $(\FullDuBois{X}, F)$.  First, one considers a resolution $\pi$ of the scheme $X$ as well as something like resolutions of the exceptional set $E$ of $\pi$ and the image of $E$ in $X$.  These resolutions themselves have exceptional sets with images in the scheme we are resolving.  Both the exceptional sets and their images must also be resolved (in a particularly compatible way) and so on.  On each of these smooth schemes we constructed, we consider the usual De Rham complex with the ``filtration b\^ete''.  All of these De Rham complexes are then pushed down (in the derived category) to $X$  and combined in a right derived inverse limit, the result of which we call $(\FullDuBois{X}, F)$.  The filtration is induced by the filtrations on the smooth schemes; see \cite{GNPP}.  The complexity of the definition has been an obstacle to the study of Du Bois singularities.

Regardless, each graded piece $\Gr_F^i \FullDuBois{X}$ of $\FullDuBois{X}$ is an object of $D^b_{\coherent}(X)$ and can be thought of as a replacement for $\Omega_X^i$ in the case when $X$ is singular (in fact for $X$ smooth we have $\Gr_F^i \FullDuBois{X}[i] \qis \Omega^i_X$).  Therefore ${\underline \Omega {}^i_{X}}$ is often used to denote $\Gr_F^i \FullDuBois{X}[i]$.  If $X$ is projective over $\bC$, then there is a Hodge to De Rham-like spectral sequence, $E^{pq}_1 = \bH^q(X, \underline \Omega {}^p_{X} )$, which collapses at the $E_1$ stage, converges to $H^i(X^{\an}, \bC)$, and induces the usual Hodge structure of \cite{DeligneHodgeIII}; see \cite{DuBoisMain}.  In particular, for $X$ projective over $\bC$, one always has a surjection
\[
\xymatrix{H^i(X^{\an}, \bC) \ar@{->>}[r] & \bH^i(X, \DuBois{X})}.
\]   Because of this, one often works over $\bC$.

In this paper we are mainly concerned with properties of the zeroth graded piece of this complex, $\DuBois{X}$.  Thus we include a list of its relevant properties.  We will not make use of the Hodge to De Rham-like spectral sequence related to Du Bois singularities.

\begin{theorem}
\label{TheoremOldDuBoisProperties}
For every reduced separated scheme $Y$ over a field of characteristic zero, the complex $\DuBois{X} \in D^b_{\coherent}(X)$ has the following properties.
\numberwithin{equation}{theorem}
\begin{itemize}
\item[(i)]  \emph{Restriction to Open Sets: }  If $U \subset Y$ is open, then $\DuBois{Y}|_U \qis \DuBois{U}$.
\item[(ii)]  \emph{Functoriality:}  If $\phi : Y' \rightarrow Y$ is a morphism of finite type of reduced schemes, then it induces a natural map of objects in the derived category, $\DuBois{Y} \rightarrow R \phi_* \DuBois{Y'}$.
\item[(iii)]  \emph{Natural Maps:} There are natural maps
\[
\O_Y \rightarrow \DuBois{Y} \rightarrow R \pi_* \O_{\tld Y}
\]
in $D^b_{\coherent}(Y)$ where $\pi : \tld Y \rightarrow Y$ is a resolution of singularities of $Y$ and the composition is the usual map.  Both maps are quasi-isomorphisms if $Y$ is smooth.  The first map is a quasi-isomorphism if $Y$ has simple normal crossing singularities.
\item[(iv)]  \emph{An Exact Triangle:}  Suppose that $\pi : \tld Y \rightarrow Y$ is a projective morphism and $X \subset Y$ a reduced closed subscheme such that $\pi$ is an isomorphism outside of $X$.  Let $E$ denote the reduced subscheme of $\tld Y$ with support equal to $\pi^{-1}(X)$ and $\pi' : E \rightarrow X$ the induced map.  Then one has an exact triangle of objects in the derived category,
\begin{equation}
\label{fundamentalTriangle}
\xymatrix{
\DuBois{Y} \ar[r] & \DuBois{X} \oplus R \pi_* \DuBois{\tld Y} \ar[r]^-{-} & R \pi'_* \DuBois{E} \ar[r]^-{+1} & .\\
}
\end{equation}
In particular, if $\pi$ is a log resolution of $X$ in $Y$ (that is, $\tld Y$ is smooth and $E = (\pi^{-1}(X))_{\red}$ is a divisor with simple normal crossings), then we have the following triangle
\begin{equation}
\label{fundamentalResolutionTriangle}
\xymatrix{
\DuBois{Y} \ar[r] & \DuBois{X} \oplus R \pi_* \O_{\tld Y} \ar[r]^-{-} & R \pi_*' \O_E \ar[r]^-{+1} & .\\
}
\end{equation}
\end{itemize}
\end{theorem}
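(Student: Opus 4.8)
The plan is to verify the four items separately, drawing throughout on the established theory of cubic (equivalently, simplicial) hyperresolutions from \cite{GNPP} and \cite{DuBoisMain}; the aim is not to find new arguments but to record the statements in exactly the form used later in the paper. Recall that for a reduced scheme $Z$ one fixes a hyperresolution $Z_{\mydot} \to Z$, that $\FullDuBois{Z}$ is the right derived inverse limit of the De Rham complexes $\Omega^{\mydot}_{Z_{\mydot}}$ of its terms, that $F$ is induced by the b\^ete filtrations, and hence that $\DuBois{Z} = \Gr^0_F \FullDuBois{Z}$ is computed by the right derived inverse limit of the structure sheaves $\O_{Z_{\mydot}}$, since $\Gr^0$ of the b\^ete filtration on $\Omega^{\mydot}$ is $\O$ in degree zero.

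For (i), I would use that the restriction $Z_{\mydot}|_U \to U$ of a hyperresolution of $Y$ is a hyperresolution of $U$, and that formation of $\Omega^{\mydot}$ and of the derived inverse limit commutes with restriction to an open subset; this is \cite{GNPP}. For (ii), one uses that hyperresolutions may be chosen compatibly with a given morphism $\phi : Y' \to Y$ — there are hyperresolutions of $Y'$ and of $Y$ together with a morphism of the underlying diagrams over $\phi$ — so that the pullback maps on De Rham complexes assemble, after applying $R\phi_*$ and $\Gr^0_F$, to the asserted morphism $\DuBois{Y} \to R\phi_* \DuBois{Y'}$; see \cite{GNPP} and \cite{DuBoisMain}.

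For (iii), the augmentation $\O_Y \to \FullDuBois{Y}$, hence $\O_Y \to \DuBois{Y}$ after passing to $\Gr^0_F$, comes from the structure maps $\O_Y \to \O_{Z_{\mydot}}$ of the hyperresolution; the map $\DuBois{Y} \to R\pi_* \O_{\tld Y}$ is the composite of the functoriality map $\DuBois{Y} \to R\pi_* \DuBois{\tld Y}$ of (ii) with the quasi-isomorphism $\DuBois{\tld Y} \qis \O_{\tld Y}$, which holds because for a smooth scheme the identity is a hyperresolution and $\Gr^0_F \Omega^{\mydot} \qis \O$; one then checks over a dense smooth open set that the composite $\O_Y \to R\pi_* \O_{\tld Y}$ is the canonical map. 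The smooth case is then immediate, and the simple normal crossings case is the theorem of Du Bois that $\DuBois{}$ is quasi-isomorphic to $\O$ for an SNC scheme: taking the hyperresolution given by the normalization $\coprod V_i$ together with its iterated intersections, the associated $\Gr^0_F$-complex $\bigoplus_i \O_{V_i} \to \bigoplus_{i<j} \O_{V_i \cap V_j} \to \cdots$ has cohomology $\O_Y$ concentrated in degree zero; see \cite{DuBoisMain}.

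For (iv), the essential input is the descent property of hyperresolutions for cartesian squares of ``resolution type''. Since $\pi : \tld Y \to Y$ is projective and an isomorphism over $Y \setminus X$, the square with vertices $E$, $\tld Y$, $X$, $Y$ is of this type, so a hyperresolution of $Y$ may be assembled from chosen hyperresolutions of $X$, $\tld Y$ and $E$ along it; the resulting short exact sequence of filtered complexes yields, after pushing forward to $Y$ and passing to $\Gr^0_F$, the exact triangle \eqref{fundamentalTriangle}. This is the content of the Mayer--Vietoris/descent theorems of \cite{GNPP}; see also \cite{DuBoisMain}. The triangle \eqref{fundamentalResolutionTriangle} is then the special case in which $\pi$ is a log resolution: $\tld Y$ is smooth and $E$ is a simple normal crossings divisor, so by (iii) one replaces $\DuBois{\tld Y}$ by $\O_{\tld Y}$ and $\DuBois{E}$ by $\O_E$. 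The step I expect to demand the most care is precisely (iv) — making the gluing of hyperresolutions along the square rigorous and checking that the connecting morphism of the resulting triangle is compatible with the natural maps of (ii) and (iii) — since that is where the hyperresolution formalism does its real work, whereas everything else amounts to unwinding $\Gr^0_F$ of familiar constructions.
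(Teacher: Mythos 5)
Your proposal is correct and follows the same route as the paper, which simply cites the relevant results of Du Bois and of Guill\'en--Navarro Aznar--Pascual--Puerta for each of the four items; your sketch just unwinds those same citations (restriction and functoriality of hyperresolutions, the augmentation and the SNC Mayer--Vietoris complex for (iii), and descent along the blow-up square for (iv)). No discrepancy to report.
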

\begin{proof}
\begin{itemize}
\item[]
\item[(i)]  This follows from the construction of cubic hyperresolutions; see \cite{GNPP}.  Alternately see \cite[3.2]{DuBoisMain}.
\item[(ii)]  See \cite[3.21]{DuBoisMain} and \cite[I, 3.8 and V, 3.6]{GNPP}.
\item[(iii)]  See \cite[4.1]{DuBoisMain} and \cite[4.5]{DuBoisMain}.  The statements about isomorphisms for certain classes of singularities of $X$ follow directly from constructions of hyperresolutions.
\item[(iv)]  See \cite[4.11]{DuBoisMain}.  This is also particularly easy to see from the point of view of cubic hyperresolutions; see \cite{GNPP}.
\end{itemize}
\end{proof}

Now we define Du Bois singularities.

\begin{definition}
Suppose $X$ is a separated reduced scheme of finite type over a field of characteristic zero.  There is a natural map $\O_X \rightarrow \DuBois{X}$ by \ref{TheoremOldDuBoisProperties}(iii).  We say that $X$ has \emph{Du Bois singularities} if that map is a quasi-isomorphism.
\end{definition}

\begin{remark}
If $X$ is projective over $\bC$ and has Du Bois singularities, it is immediate from the Hodge to De Rham-like spectral sequence mentioned above that the natural map $H^i(X^{\an}, \bC) \rightarrow H^i(X, \O_X)$ is surjective.
\end{remark}

The following lemma about gluing Du Bois singularities can be used to construct numerous (non-normal) examples; see  \cite[3.8, 4.10]{DuBoisMain}.

\begin{lemma}
\label{GluingDuBois}
Suppose that $X$ is a reduced separated scheme of finite over a field of characteristic with closed subschemes $X_1$ and $X_2$ such that $X = X_1 \cup X_2$.  Suppose that $X_1$, $X_2$ and $X_1 \cap X_2$ all have Du Bois singularities (in particular we assume that $X_1 \cap X_2$ is reduced), then $X$ also has Du Bois singularities.
\end{lemma}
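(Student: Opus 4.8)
The plan is to run a Mayer--Vietoris comparison: play off the exact triangle built from the structure sheaves of $X$, $X_1$, $X_2$, $X_1 \cap X_2$ against the corresponding exact triangle of Du Bois complexes, via the natural transformation $\O_{(-)} \to \DuBois{(-)}$ of Theorem~\ref{TheoremOldDuBoisProperties}(iii), and then finish with the five lemma. To begin, note that $X_1$ and $X_2$ are reduced (Du Bois singularities being defined only for reduced schemes), $X_1 \cap X_2$ is reduced by hypothesis, and $X = X_1 \cup X_2$; hence $\sI_X = \sI_{X_1} \cap \sI_{X_2}$ and $\sI_{X_1 \cap X_2} = \sI_{X_1} + \sI_{X_2}$, so that
\[
0 \longrightarrow \O_X \longrightarrow \O_{X_1} \oplus \O_{X_2} \longrightarrow \O_{X_1 \cap X_2} \longrightarrow 0
\]
is a short exact sequence of coherent sheaves on $X$ (all sheaves pushed forward to $X$), which we regard as an exact triangle in $D^b_{\coherent}(X)$.

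On the Du Bois side, the closed cover $X = X_1 \cup X_2$ gives rise to a Mayer--Vietoris exact triangle
\[
\xymatrix{
\DuBois{X} \ar[r] & \DuBois{X_1} \oplus \DuBois{X_2} \ar[r] & \DuBois{X_1 \cap X_2} \ar[r]^-{+1} & ,
}
\]
obtained from a cubic hyperresolution of $X$ assembled compatibly out of hyperresolutions of $X_1$, $X_2$ and $X_1 \cap X_2$; see \cite[3.8, 4.10]{DuBoisMain}. (Alternatively, one may apply Theorem~\ref{TheoremOldDuBoisProperties}(iv) to the finite morphism $X_1 \sqcup X_2 \to X$, which is an isomorphism away from $X_1 \cap X_2$, and then cancel the redundant $\DuBois{X_1 \cap X_2}$ summand.)

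By the functoriality and compatibility of the natural maps $\O_{(-)} \to \DuBois{(-)}$ recorded in Theorem~\ref{TheoremOldDuBoisProperties}(ii)--(iii), the two triangles fit into a morphism of exact triangles
\[
\xymatrix{
\O_X \ar[r] \ar[d] & \O_{X_1} \oplus \O_{X_2} \ar[r] \ar[d] & \O_{X_1 \cap X_2} \ar[r] \ar[d] & \O_X[1] \ar[d] \\
\DuBois{X} \ar[r] & \DuBois{X_1} \oplus \DuBois{X_2} \ar[r] & \DuBois{X_1 \cap X_2} \ar[r] & \DuBois{X}[1].
}
\]
The two middle vertical arrows are quasi-isomorphisms precisely because $X_1$, $X_2$ and $X_1 \cap X_2$ have Du Bois singularities. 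Passing to the associated long exact sequences of cohomology sheaves and applying the five lemma in each degree forces the leftmost vertical arrow $\O_X \to \DuBois{X}$ to be a quasi-isomorphism, i.e.\ $X$ has Du Bois singularities. The only genuine content here is the Mayer--Vietoris triangle for $\DuBois{}$ together with the commutativity of the ladder above; once those are granted --- and they are exactly what the cited results supply --- the remainder is a formal diagram chase.
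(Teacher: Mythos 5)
Your proof is correct and follows essentially the same route as the paper: both hinge on the Mayer--Vietoris exact triangle $\DuBois{X} \to \DuBois{X_1} \oplus \DuBois{X_2} \to \DuBois{X_1 \cap X_2} \to^{+1}$ from \cite[3.8, 4.10]{DuBoisMain} together with the identification $\O_X = \ker(\O_{X_1}\oplus\O_{X_2} \to \O_{X_1\cap X_2})$, and the rest is a formal diagram chase. The only cosmetic difference is that you package the chase as a five-lemma argument on a morphism of triangles (which tacitly requires checking compatibility with the connecting maps), whereas the paper works directly with the long exact sequence of the Du Bois triangle, using the surjectivity of $h^0(\DuBois{X_1})\oplus h^0(\DuBois{X_2}) \to h^0(\DuBois{X_1\cap X_2})$ to kill the higher cohomology.
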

\begin{proof}
\cite[3.8, 4.10]{DuBoisMain} gives us an exact triangle
\[
\xymatrix{
\DuBois{X} \ar[r] & \DuBois{X_1} \oplus \DuBois{X_2} \ar[r] & \DuBois{X_1 \cap X_2} \ar[r]^-{+1} & .\\
}
\]
Note that the map
\[
\xymatrix{h^0(\DuBois{X_1}) \oplus h^0(\DuBois{X_2}) \ar[r] & h^0(\DuBois{X_1 \cap X_2})}
\]
is surjective.  Basic facts about pullback squares guarantee that $h^0(\DuBois{X})$ is isomorphic to $\O_X$, and the long exact sequence plus the surjectivity mentioned above guarantee that $h^i(\DuBois{X}) = 0$ for $i > 0$.  One can give a roughly equivalent proof using \ref{TheoremOldDuBoisProperties} part (iv) above instead of the results  \cite[3.8, 4.10]{DuBoisMain}.
\end{proof}

The following criterion of Kov\'acs is particularly useful for proving schemes have Du Bois singularities.

\begin{theorem}[{\cite[2.3]{KovacsDuBoisLC1}}]
\label{KovacsDuBoisSplitting}
Let $U$ be a reduced separated scheme of finite type over a field of characteristic zero such that $\O_U \rightarrow \DuBois{U}$ has a left inverse (that is a map $\DuBois{U} \rightarrow \O_U$ such that the composition $\O_U \rightarrow \DuBois{U} \rightarrow \O_U$ is a quasi-isomorphism); then $U$ has Du Bois singularities.
\end{theorem}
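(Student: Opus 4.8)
The plan is to convert the statement into the vanishing of a single mapping cone. Write $c\colon\O_U\to\DuBois{U}$ for the natural map and $s\colon\DuBois{U}\to\O_U$ for the given left inverse, so that $s\circ c\qis\mathrm{id}_{\O_U}$, and complete $c$ to a distinguished triangle
\[
\O_U \xrightarrow{c} \DuBois{U} \xrightarrow{p} Q \xrightarrow{+1}
\]
in $D^b_{\coherent}(U)$. The first step is purely formal: a morphism that admits a left inverse completes to a split triangle. Indeed, applying $\mathrm{Hom}_{D(U)}(-,\O_U)$ to the triangle above and tracing $\mathrm{id}_{\O_U}$ through the resulting long exact sequence shows that the connecting morphism $Q\to\O_U[1]$ vanishes; hence the triangle splits and $\DuBois{U}\qis\O_U\oplus Q$, with $c$ identified with the inclusion of the first summand. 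Therefore $U$ has Du Bois singularities if and only if $Q\qis 0$, and everything reduces to proving that $Q$ is acyclic.

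The next step is to constrain $Q$. Over the smooth locus $U_{\mathrm{sm}}$ the map $c$ is a quasi-isomorphism by Theorem~\ref{TheoremOldDuBoisProperties}(i) and (iii), and $U_{\mathrm{sm}}$ is dense since $U$ is reduced of finite type over a field of characteristic zero; thus $Q$ is supported on a nowhere-dense closed subset, so each of its cohomology sheaves is torsion. Moreover $\DuBois{U}$, and hence its direct summand $Q$, has no cohomology in negative degrees (clear from the hyperresolution construction). In degree zero one may use the known fact that $h^0(\DuBois{U})$ is the structure sheaf of the seminormalization of $U$, hence a torsion-free $\O_U$-module: a direct summand of it that is torsion must vanish, so $h^0(Q)=0$. (Alternatively, the map on $h^0$ induced by $\DuBois{U}\to R\pi_*\O_{\tld{U}}$ of (iii), for $\pi$ a resolution of $U$, is injective, and $\pi_*\O_{\tld{U}}$ embeds in the sheaf of total quotient rings of $U$, which forces the same conclusion.) Hence $Q\in D^{\ge 1}_{\coherent}(U)$.

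The remaining point — that $Q$ also has no cohomology in positive degrees — is the genuinely non-formal heart of the argument, and the step I expect to be the main obstacle: some real input about the structure of $\DuBois{U}$ is indispensable, since the formal considerations above would equally allow $\DuBois{U}\qis\O_U\oplus(\text{a skyscraper in degree }1)$, so it is the \emph{global} splitting of $c$ — not merely its effect on individual cohomology sheaves — that must be exploited. I would attack this via the Hodge-theoretic properties of the Du Bois complex, with Grothendieck--Serre duality as a fallback. In the Hodge approach, when $U$ is proper over $\bC$ the degeneration of the Hodge--de Rham type spectral sequence furnishes a surjection $H^i(U^{\an},\bC)\twoheadrightarrow\bH^i(U,\DuBois{U})$ that factors through $H^i(U,\O_U)$ via $c$; combined with the left inverse this makes $c$ an isomorphism on all hypercohomology, and one then propagates this to the acyclicity of $Q$ (reducing a general $U$ to the proper case by a compactification, and twisting by ample line bundles to detect the top nonvanishing cohomology sheaf of $Q$) — a propagation that itself requires care. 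In the duality approach, applying $\mathbb{D}_U=R\mathcal{H}om_{\O_U}(-,\omega_U^{\mydot})$ to the splitting gives $\mathbb{D}_U(\DuBois{U})\qis\omega_U^{\mydot}\oplus\mathbb{D}_U(Q)$ with $\mathbb{D}_U(Q)\in D^{\le -1}_{\coherent}(U)$, and one would invoke the known duality and vanishing properties of $\DuBois{U}$ to rule out this spurious summand. Either way, the positive-degree vanishing is where the work concentrates; once it is in place the theorem follows at once.
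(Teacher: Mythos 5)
Your formal reductions are correct: a morphism with a left inverse does complete to a split triangle, so $\DuBois{U}\qis\O_U\oplus Q$ and the theorem is equivalent to the acyclicity of $Q$; the support argument and the degree-zero vanishing are also fine in substance. (One caveat on the latter: the fact that $h^0(\DuBois{U})$ is the structure sheaf of the seminormalization is, in this paper, obtained downstream of Theorem \ref{AlternateDuBoisCharacterization}, whose proof uses ``rational implies Du Bois'' --- itself deduced from the very statement you are proving --- so to avoid circularity you must either invoke Saito's independent proof of the seminormality statement or use your alternative argument via the injection of $h^0(\DuBois{U})$ into the torsion-free sheaf $\pi_*\O_{\tld U}$, which itself deserves a justification from the hyperresolution construction.) The genuine gap is exactly where you locate it: the vanishing of $h^i(Q)$ for $i>0$ is the entire content of the theorem, and you do not prove it. You name two candidate strategies and concede that each ``requires care''; a reduction of a statement to its essential difficulty together with a list of tools that might resolve it is not a proof.

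For what it is worth, the paper itself supplies no proof --- the result is quoted from Kov\'acs \cite[2.3]{KovacsDuBoisLC1} --- but the remark following it isolates precisely the input your first strategy targets: for $X$ projective over $\bC$ the surjection $H^i(X^{\an},\bC)\twoheadrightarrow\bH^i(X,\DuBois{X})$ factors through $H^i(X,\O_X)$, so the natural map $H^i(X,\O_X)\rightarrow\bH^i(X,\DuBois{X})$ is surjective, and combined with the split injectivity from your decomposition this yields $\bH^i(X,Q)=0$ for all $i$. You are right that this does not formally give $Q\qis 0$: the surjection from topological cohomology does not survive twisting by an ample line bundle, so the standard ``twist by $L^N$, apply Serre vanishing in the hypercohomology spectral sequence, and detect the top nonvanishing $h^j(Q)$ via $H^0(X,h^j(Q)\otimes L^N)$'' device cannot be run verbatim; and reducing a general separated $U$ to the projective case is not automatic, since a left inverse defined on $U$ need not extend over a compactification. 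Overcoming exactly these two obstacles is the substance of Kov\'acs's argument, and it is the part your proposal leaves open. The duality fallback is weaker still: unlike the rational-singularities case, where Grauert--Riemenschneider vanishing forces $R\pi_*\omega_{\tld X}^{\mydot}$ into a single degree, there is no comparable a priori concentration statement for $R\mathcal{H}om_{\O_U}(\DuBois{U},\omega_U^{\mydot})$ available to you here, so that route does not close the gap either.
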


\begin{remark}
As an easy corollary, using \ref{TheoremOldDuBoisProperties} (iii), Kov\'acs proved that rational singularities are Du Bois.
\end{remark}

\begin{remark}
This was originally only proven over $\bC$.  The proof relies on the following fact:  If $X$ is projective over $\bC$ then there is a surjective map $H^i(X, \O_X) \rightarrow \bH^i(X, \DuBois{X})$.  This surjectivity is easily seen to be preserved by base change of our underlying field.  Using this fact, it is relatively easy to extend \ref{KovacsDuBoisSplitting} to other fields of characteristic zero.
\end{remark}

There is a corollary of this result that we will need; also see \cite[12.8]{KollarShafarevich} and \cite{KovacsRat}.

\begin{corollary}[{\cite[2.4]{KovacsDuBoisLC1}}]
\label{KovacsMapDuBoisSplitting}
Let $f : V \rightarrow U$ be a morphism of reduced separated schemes of finite type over a field of characteristic zero.  Suppose that $V$ has Du Bois singularities and that the natural map $\O_U \rightarrow Rf_* \O_V$ has a left inverse, then $U$ has Du Bois singularities as well.
\end{corollary}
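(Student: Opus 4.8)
The plan is to reduce the statement to Kov\'acs' splitting criterion, Theorem \ref{KovacsDuBoisSplitting}: it suffices to produce a left inverse to the natural map $\O_U \rightarrow \DuBois{U}$ in the derived category. To build such a left inverse I would route it through $V$ using the functoriality of the Du Bois complex.

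First I would apply Theorem \ref{TheoremOldDuBoisProperties}(ii) to $f : V \rightarrow U$ to obtain the natural map $\DuBois{U} \rightarrow Rf_* \DuBois{V}$, and record the commutativity of the square
\[
\xymatrix{
\O_U \ar[r] \ar[d] & \DuBois{U} \ar[d] \\
Rf_* \O_V \ar[r] & Rf_* \DuBois{V}
}
\]
whose horizontal arrows are the natural map of Theorem \ref{TheoremOldDuBoisProperties}(iii) on $U$ and its derived pushforward from $V$. Since $V$ has Du Bois singularities, $\O_V \rightarrow \DuBois{V}$ is a quasi-isomorphism, hence so is $Rf_* \O_V \rightarrow Rf_* \DuBois{V}$. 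Writing $\rho : Rf_* \O_V \rightarrow \O_U$ for the hypothesized left inverse of $\O_U \rightarrow Rf_* \O_V$, I would then define $\beta : \DuBois{U} \rightarrow \O_U$ as the composition
\[
\xymatrix{
\DuBois{U} \ar[r] & Rf_* \DuBois{V} & Rf_* \O_V \ar[l]_-{\qis} \ar[r]^-{\rho} & \O_U
}
\]
using the inverse of the quasi-isomorphism above. Chasing the commutative square shows that $\beta$ composed with $\O_U \rightarrow \DuBois{U}$ equals $\rho$ composed with $\O_U \rightarrow Rf_* \O_V$, which is the identity of $\O_U$ by hypothesis. Thus $\beta$ is a left inverse to $\O_U \rightarrow \DuBois{U}$, and Theorem \ref{KovacsDuBoisSplitting} gives that $U$ has Du Bois singularities. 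Note that no hypothesis on the singularities of $U$ itself enters, only the Du Bois property of the source $V$ together with the splitting of $\O_U \rightarrow Rf_* \O_V$.

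The one point I would check with care is the commutativity of the displayed square, i.e. that the natural transformation $\O_{(-)} \rightarrow \DuBois{(-)}$ of Theorem \ref{TheoremOldDuBoisProperties}(iii) is compatible with the functoriality maps of Theorem \ref{TheoremOldDuBoisProperties}(ii). This compatibility is part of the hyperresolution package (see \cite[3.21]{DuBoisMain} and \cite[I, 3.8 and V, 3.6]{GNPP}); granting it, the remainder is a formal manipulation of morphisms in the derived category, and this is really the only potential obstacle.
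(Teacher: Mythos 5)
Your argument is correct and is exactly the intended derivation: the paper states this as an immediate corollary of Theorem \ref{KovacsDuBoisSplitting} (citing \cite[2.4]{KovacsDuBoisLC1} for the proof), and the cited proof proceeds precisely by using the functoriality of \ref{TheoremOldDuBoisProperties}(ii), the quasi-isomorphism $Rf_*\O_V \qis Rf_*\DuBois{V}$ coming from $V$ being Du Bois, and the hypothesized left inverse $\rho$ to manufacture a left inverse of $\O_U \rightarrow \DuBois{U}$. The compatibility of the natural map $\O_{(-)} \rightarrow \DuBois{(-)}$ with the functoriality maps, which you rightly flag as the only delicate point, is indeed part of the hyperresolution formalism and poses no obstacle.
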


\section{A Hyperresolution-Free Characterization of Du Bois Singularities}

We begin with a lemma that has been used implicitly in \cite[3.2]{KovacsDuBoisLC2} and \cite[7.7]{DuBoisDuality} in relation to Du Bois singularities. A precise statement is included since this result is critical in the construction of the new characterization of Du Bois singularities as well as certain applications.

\begin{lemma}
\label{TriangleSumToEqualCompletions}
Let $\sC$ be a triangulated category.  Suppose that we have objects $A, B, C, D$ and arrows $s : A \rightarrow B$, $t : A \rightarrow C$, $u : B \rightarrow D$, $v : C \rightarrow D$ such that
\[
\xymatrix{
A \ar[r]^-{s, -t} & B \oplus C \ar[r]^-{u + v} & D \ar[r]^-r & T(A) \\
}
\]
is an exact triangle.  If $\xymatrix{A \ar[r]^s & B \ar[r]^\phi & M \ar[r]^-{\psi} & T(A) }$ is an exact triangle, then there exists morphisms $f$ and $g$ such that
\[
\xymatrix{C \ar[r]^{v} & D \ar[r]^f & M \ar[r]^-g & T(C) }
\]
is an exact triangle.  Furthermore, we have a map of exact triangles
\[
\xymatrix{
A \ar[d]_t \ar[r]^s & B \ar[r]^\phi \ar[d]^u & M \ar[r]^{\psi} \ar@{=}[d]  & \\
C \ar[r]^{v} & D \ar[r]^f & M \ar[r]^{g} & .\\
}
\]
\end{lemma}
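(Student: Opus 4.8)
The plan is to deduce the entire statement from a single application of the octahedral axiom. First observe what the hypothesis is really saying: the square with maps $s,t,u,v$ is homotopy cocartesian, i.e. $D$ is (isomorphic to) a cone of $(s,-t)\colon A\to B\oplus C$. In particular the composite of the first two maps of the hypothesized triangle vanishes, which is just the assertion $u\circ s=v\circ t$; this is the commutativity of the left-hand square of the desired morphism of triangles. Also, $B\oplus C$ carries the split exact triangle $C\xrightarrow{k} B\oplus C\xrightarrow{q} B\xrightarrow{\,0\,} T(C)$, with $k$ the inclusion and $q$ the projection, so $T(C)$ is a cone of $q$; and $q\circ(s,-t)=s$.

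Now apply the octahedral axiom to the composition $A\xrightarrow{(s,-t)} B\oplus C\xrightarrow{q} B$, taking as input triangles the hypothesized triangle on $(s,-t)$ (cone $D$), the split triangle on $q$ (cone $T(C)$), and the given triangle $A\xrightarrow{s} B\xrightarrow{\phi} M\xrightarrow{\psi} T(A)$ on the composite (cone $M$). The axiom produces morphisms $f\colon D\to M$ and $g\colon M\to T(C)$ fitting into an exact triangle
\[
D\xrightarrow{f} M\xrightarrow{g} T(C)\xrightarrow{-T(v)} T(D),
\]
the last map being $T$ of $C\xrightarrow{k} B\oplus C\to D$, which equals $T(v)$; it also furnishes the usual commutativities of the octahedron. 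Rotating this triangle backwards gives exactly $C\xrightarrow{v} D\xrightarrow{f} M\xrightarrow{g} T(C)$, the exact triangle claimed (the first map of the rotation is $-T^{-1}(-T(v))=v$).

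It remains to extract the right-hand square of the morphism of triangles. The octahedron gives the identity $f\circ(u,v)=\phi\circ q$ of maps $B\oplus C\to M$; restricting it along the summand inclusion $B\hookrightarrow B\oplus C$ yields $f\circ u=\phi$ (and restricting along $C\hookrightarrow B\oplus C$ recovers $f\circ v=0$, consistent with the triangle just built). Similarly the octahedron relates $g$ and $\psi$ through the third maps of the triangles on $q$ and on $s$; projecting that relation onto the $T(C)$-summand of $T(B\oplus C)$ and using $q\circ(s,-t)=s$ produces $g=T(t)\circ\psi$, which is precisely the commutativity of the right-hand square with fourth vertical map $T(t)$. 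The one fussy point throughout is sign bookkeeping: one must fix the signs in the split triangle on $q$ and in the chosen form of the octahedral axiom so that the canonical maps come out as $v$, $\phi$ and $T(t)\circ\psi$ rather than their negatives. This is harmless — one may compose $q$ or $k$ with $-1$ as needed — and I expect no genuine obstacle beyond it. (An essentially equivalent route is to quote the standard fact that the horizontal cofibres of a homotopy cocartesian square are canonically isomorphic, and then transport the given triangle on $s$ across that isomorphism.)
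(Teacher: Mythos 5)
Your argument is correct and is precisely the approach the paper takes: its proof consists of the single remark that the lemma is ``a straightforward application of the octahedral axiom'' (deferring details to \cite[5.3.1]{KarlThesis}), and applying that axiom to the composition $A \xrightarrow{(s,-t)} B\oplus C \xrightarrow{q} B$ with the split triangle on $q$ is the natural way to carry it out. Your extraction of the commutativities ($f\circ u=\phi$ from $f\circ(u,v)=\phi\circ q$, and $g=T(t)\circ\psi$ from $T((s,-t))\circ\psi=p_b\circ g$) checks out, modulo the sign conventions you already flag.
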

\begin{proof}
The proof of this lemma is a straightforward application of the octahedral axiom; see \cite[5.3.1]{KarlThesis} for details.
\end{proof}

We now apply the previous lemma in the case of the exact triangle, \ref{fundamentalTriangle}.

\begin{proposition}
\label{KeyTriangles}
Let $\pi : \tld X \rightarrow X$ be a proper birational morphism of reduced separated schemes over a field of characteristic zero such that $\tld X$ has Du Bois singularities.  Let $\Sigma$ be a closed subset of $X$ such that $\pi$ is an isomorphism outside of $\Sigma$.  Let $E$ be the reduced total transform of $\Sigma$, suppose $E$ also has Du Bois singularities, and let $\sI_E \subset \O_{\tld X}$ be the ideal sheaf of $E$ (these conditions are always satisfied when $\tld X$ is a strong log resolution of $\Sigma \subset X$).   Then the following is a map of exact triangles:
\[
\xymatrix{
R \pi_* \sI_E \ar[r] \ar@{=}[d] & \DuBois{X} \ar[r] \ar[d] & \DuBois{\Sigma} \ar[r]^-{+1} \ar[d] & \\
R \pi_* \sI_E \ar[r] & R \pi_* \O_{\tld X} \ar[r]^v & R \pi_* \O_{E} \ar[r]^-{+1} & \\
}
\]
\end{proposition}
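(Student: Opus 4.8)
The plan is to apply Lemma~\ref{TriangleSumToEqualCompletions} to the fundamental exact triangle \eqref{fundamentalTriangle} of Theorem~\ref{TheoremOldDuBoisProperties}(iv). That theorem, applied to the proper birational morphism $\pi : \tld X \to X$ and the closed subset $\Sigma \subset X$ (so that $E$ is the reduced total transform of $\Sigma$ and $\pi' : E \to \Sigma$ the induced map), gives the exact triangle
\[
\DuBois{X} \longrightarrow \DuBois{\Sigma} \oplus R\pi_* \DuBois{\tld X} \longrightarrow R\pi'_* \DuBois{E} \xrightarrow{\;+1\;} .
\]
Since $\tld X$ has Du Bois singularities the natural map $\O_{\tld X} \to \DuBois{\tld X}$ is a quasi-isomorphism, so applying $R\pi_*$ gives $R\pi_* \O_{\tld X} \qis R\pi_* \DuBois{\tld X}$; likewise $E$ being Du Bois gives $R\pi'_* \O_E \qis R\pi'_* \DuBois{E}$, and $R\pi'_* \O_E$ coincides with $R\pi_* \O_E$ once everything is viewed on $X$. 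Hence the triangle above becomes
\[
\DuBois{X} \longrightarrow \DuBois{\Sigma} \oplus R\pi_* \O_{\tld X} \longrightarrow R\pi_* \O_E \xrightarrow{\;+1\;} ,
\]
which is exactly of the shape required by the hypothesis of Lemma~\ref{TriangleSumToEqualCompletions}, with $A = \DuBois{X}$, $B = \DuBois{\Sigma}$, $C = R\pi_* \O_{\tld X}$, $D = R\pi_* \O_E$; here $s : A \to B$ is restriction of the Du Bois complex to $\Sigma$, the maps $t : A \to C$ and $u : B \to D$ are the functoriality maps of Theorem~\ref{TheoremOldDuBoisProperties}(ii) for $\pi$ and for $\pi'$, and $v : C \to D$ is $R\pi_*$ of the restriction $\O_{\tld X} \to \O_E$. (The signs attached to the two arrows are as in the lemma; verifying this against \cite[4.11]{DuBoisMain} is one of the bookkeeping points below.)

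Next I would feed a completion of $s$ into Lemma~\ref{TriangleSumToEqualCompletions}: pick $M$ together with an exact triangle $\DuBois{X} \xrightarrow{\,s\,} \DuBois{\Sigma} \to M \to T(\DuBois{X})$. The lemma then supplies morphisms $f,g$ making $C \xrightarrow{\,v\,} D \xrightarrow{\,f\,} M \xrightarrow{\,g\,} T(C)$ an exact triangle, together with a map of exact triangles whose rows are these completions of $s$ and of $v$, which restricts to $t$ and $u$ on the first two vertical arrows and is the identity on the $M$-column. It remains to identify $M$. Since $C \xrightarrow{\,v\,} D \to M \to T(C)$ is exact, $M$ is a cone of $v$; but $v$ is $R\pi_*$ of the surjection $\O_{\tld X} \to \O_E$ with kernel $\sI_E$, so the short exact sequence $0 \to \sI_E \to \O_{\tld X} \to \O_E \to 0$ exhibits $T(R\pi_*\sI_E)$ as a cone of $v$ as well. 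Hence $M \qis T(R\pi_*\sI_E)$, and this isomorphism may be chosen compatibly with the triangle $C \xrightarrow{\,v\,} D \to M$, identifying that triangle with $R\pi_*$ applied to the above short exact sequence.

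Finally, substituting $M = T(R\pi_*\sI_E)$ into the map of exact triangles produced by the lemma and rotating both rows one step backwards moves the now-identity $T^{-1}M = R\pi_*\sI_E$ into the first column and yields precisely the diagram in the statement: the top row becomes $R\pi_*\sI_E \to \DuBois{X} \xrightarrow{\,s\,} \DuBois{\Sigma} \xrightarrow{\;+1\;}$ (a rotation of the chosen completion of $s$, hence an exact triangle), the bottom row becomes $R\pi_*\sI_E \to R\pi_*\O_{\tld X} \xrightarrow{\,v\,} R\pi_*\O_E \xrightarrow{\;+1\;}$ (the pushforward of $0 \to \sI_E \to \O_{\tld X} \to \O_E \to 0$), the left vertical arrow is the identity, and the remaining verticals are $t : \DuBois{X} \to R\pi_*\O_{\tld X}$ and $u : \DuBois{\Sigma} \to R\pi_*\O_E$. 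The only points that should need genuine attention are formal: matching the signs in \eqref{fundamentalTriangle} with the hypothesis of Lemma~\ref{TriangleSumToEqualCompletions}, and checking that after the backward rotation the bottom row is literally identified with the pushforward of the structure-sheaf sequence rather than merely abstractly isomorphic to it; all the substantive content is already carried by the octahedral-axiom proof of that lemma. (When $\pi$ is a strong log resolution of $\Sigma \subset X$, the Du Bois hypotheses on $\tld X$ and $E$ hold automatically by Theorem~\ref{TheoremOldDuBoisProperties}(iii), since $\tld X$ is then smooth and $E$ has simple normal crossings, which is the parenthetical claim in the proposition.)
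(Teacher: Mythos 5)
Your proof is correct and follows essentially the same route as the paper: substitute $\O_{\tld X}$ and $\O_E$ into the triangle \eqref{fundamentalTriangle} using the Du Bois hypotheses, apply Lemma \ref{TriangleSumToEqualCompletions} after the sign adjustment, and identify the resulting cone $M$ with $T(R\pi_*\sI_E)$ via the pushed-down ideal-sheaf sequence before rotating. The paper compresses all of this into three sentences (``an easy sign switch followed by \ref{TriangleSumToEqualCompletions}''), and the details you supply are exactly the intended bookkeeping.
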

\begin{proof}
We begin with the following exact triangle
\[
\xymatrix{
\DuBois{X} \ar[r] & R \pi_* \O_{\tld X} \oplus \DuBois{\Sigma} \ar[r]^-{v - u} & R \pi_* \O_{E} \ar[r]^-{+1} &
}
\]
which we obtain from \ref{fundamentalTriangle}.  We also have the exact triangle
\[
\xymatrix{
R \pi_* \sI_E \ar[r] & R \pi_* \O_{\tld X} \ar[r]^v & R \pi_* \O_{E} \ar[r]^-{+1} & \\
}
\]
simply arising from pushing down the corresponding short exact sequence.

Applying an easy sign switch followed by \ref{TriangleSumToEqualCompletions} gives us the desired result.
\end{proof}

We now can use the previous proposition to construct our new characterization.

\begin{theorem}
\label{AlternateDuBoisCharacterization}
Let $Y$ be a reduced separated scheme of finite type over a field $k$ of characteristic zero with rational singularities.  Let $X$ be a reduced subscheme of $Y$.  Let $\sI_X$ denote the ideal sheaf of $X$ in $Y$.
Assume there exists a proper birational map $\pi : \tld Y \rightarrow Y$ such that $\tld Y$ has rational singularities, the reduced pre-image of $X$ has Du Bois singularities and $\pi$ is an isomorphism outside of $X$.
If $E$ is the reduced total transform (pre-image) of $X$ then $\DuBois{X} \qis R \pi_* \O_E$.
\end{theorem}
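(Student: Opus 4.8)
The plan is to reduce the whole statement to Proposition \ref{KeyTriangles}. Apply that proposition with $Y$ playing the role of its ``$X$'', with $\tld Y$ playing the role of its ``$\tld X$'', and with our reduced subscheme $X$ playing the role of the closed subset ``$\Sigma$''. The required hypotheses hold: $\pi$ is proper, birational, and an isomorphism outside $X$ by assumption; $\tld Y$ has rational singularities, hence Du Bois singularities by the remark following Theorem \ref{KovacsDuBoisSplitting}; and the reduced pre-image $E$ of $X$ has Du Bois singularities by assumption. Writing $\sI_E \subset \O_{\tld Y}$ for the ideal sheaf of $E$, Proposition \ref{KeyTriangles} then produces a morphism of exact triangles
\[
\xymatrix{
R \pi_* \sI_E \ar[r] \ar@{=}[d] & \DuBois{Y} \ar[r] \ar[d]^-{c} & \DuBois{X} \ar[r]^-{+1} \ar[d] & \\
R \pi_* \sI_E \ar[r] & R \pi_* \O_{\tld Y} \ar[r]^-{v} & R \pi_* \O_{E} \ar[r]^-{+1} & \\
}
\]
where, as is customary, $R \pi_* \O_E$ is identified with the derived pushforward of $\O_E$ along the induced map $E \to X$.

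Since the left-hand vertical arrow is the identity, the five lemma applied to the long exact cohomology sequences of the two rows shows that the theorem is equivalent to the assertion that the middle vertical arrow $c : \DuBois{Y} \to R \pi_* \O_{\tld Y}$ is a quasi-isomorphism: in that case the right-hand vertical arrow $\DuBois{X} \to R \pi_* \O_E$ is forced to be a quasi-isomorphism, which is exactly the claim $\DuBois{X} \qis R \pi_* \O_E$.

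To check that $c$ is a quasi-isomorphism, recall its origin: it descends from the triangle \ref{fundamentalTriangle} once one uses that $\tld Y$ is Du Bois to identify $R \pi_* \DuBois{\tld Y}$ with $R \pi_* \O_{\tld Y}$; concretely, $c$ is the functoriality map $\DuBois{Y} \to R \pi_* \DuBois{\tld Y}$ of Theorem \ref{TheoremOldDuBoisProperties}(ii) followed by the inverse of the quasi-isomorphism $R \pi_* \O_{\tld Y} \to R \pi_* \DuBois{\tld Y}$. Precomposing $c$ with the natural map $\O_Y \to \DuBois{Y}$ and invoking naturality of $\O_{(-)} \to \DuBois{(-)}$ with respect to $\pi$, one finds that the composite $\O_Y \to \DuBois{Y} \to R \pi_* \O_{\tld Y}$ is simply the canonical map. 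Since $Y$ has rational, hence Du Bois, singularities, $\O_Y \to \DuBois{Y}$ is a quasi-isomorphism, so by the two-out-of-three property it is enough to prove that the canonical map $\O_Y \to R \pi_* \O_{\tld Y}$ is a quasi-isomorphism.

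This last point is where both rationality hypotheses enter. Choose, by Hironaka, a resolution $\rho : \bar Y \to \tld Y$; then $\pi \circ \rho : \bar Y \to Y$ is a resolution of $Y$. Because $\tld Y$ has rational singularities, $\O_{\tld Y} \to R \rho_* \O_{\bar Y}$ is a quasi-isomorphism, and applying $R \pi_*$ (and using that $R \pi_* R \rho_*$ computes $R(\pi \circ \rho)_*$) shows that $R \pi_* \O_{\tld Y} \to R(\pi \circ \rho)_* \O_{\bar Y}$ is a quasi-isomorphism; because $Y$ has rational singularities, $\O_Y \to R(\pi \circ \rho)_* \O_{\bar Y}$ is a quasi-isomorphism. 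As the latter map factors as $\O_Y \to R \pi_* \O_{\tld Y} \to R(\pi \circ \rho)_* \O_{\bar Y}$, the two-out-of-three property gives that $\O_Y \to R \pi_* \O_{\tld Y}$ is a quasi-isomorphism, which completes the argument. The only step that is not purely formal is the identification of $c$ in the previous paragraph --- namely, that precomposing $c$ with $\O_Y \to \DuBois{Y}$ recovers the canonical map $\O_Y \to R \pi_* \O_{\tld Y}$ --- and this rests on the naturality of the transformation $\O_{(-)} \to \DuBois{(-)}$ with respect to morphisms (see Theorem \ref{TheoremOldDuBoisProperties}(ii)--(iii)); everything else is a formal consequence of Proposition \ref{KeyTriangles}, the five lemma, and the standard fact that rational singularities can be tested on any one resolution.
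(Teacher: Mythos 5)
Your proposal is correct and follows essentially the same route as the paper: apply Proposition \ref{KeyTriangles} with $(Y,\tld Y, X)$ in the roles of $(X,\tld X,\Sigma)$, and conclude from the resulting map of triangles that the right vertical arrow is a quasi-isomorphism once the middle one is. The paper compresses the verification that $\DuBois{Y}\rightarrow R\pi_*\O_{\tld Y}$ is a quasi-isomorphism into a single sentence, whereas you spell out the compatibility with $\O_Y\rightarrow\DuBois{Y}$ and the factorization through a resolution of $\tld Y$; these are exactly the details the paper leaves implicit, and they are supplied correctly.
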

\begin{proof}
Let $\sI_E$ be the ideal sheaf of $E \subset \tld Y$.  By \ref{KeyTriangles} we have a map of exact triangles
\[
\xymatrix{
R \pi_* \sI_E \ar[r] \ar@{=}[d] & \DuBois{Y} \ar[r] \ar[d] & \DuBois{X} \ar[d] \ar[r]^{+1} &\\
R \pi_* \sI_E \ar[r] & R \pi_* \O_{\tld Y} \ar[r] & R \pi_* \O_E \ar[r]^{+1} &\\
}
\]
Since $Y$ has rational singularities it has Du Bois singularities, so the middle vertical arrow is a quasi-isomorphism (with $\O_Y$).  The left vertical arrow is always a quasi-isomorphism, so the right vertical arrow is a quasi-isomorphism as well.  Since $\DuBois{X}$ is only defined up to quasi-isomorphism, this completes the proof.
\end{proof}

\begin{remark}
Note that the hypotheses of this theorem are satisfied if $Y$ is smooth and $\pi$ is a strong log resolution of $X$ in $Y$; that is, $\tld Y$ is smooth, the reduced pre-image of $X$ has simple normal crossings and $\pi$ is an isomorphism outside of $X$.
\end{remark}

The following special case of Theorem \ref{AlternateDuBoisCharacterization} is useful for comparing Du Bois and rational singularities.  It can also be used to easily reobtain the maps of \ref{TheoremOldDuBoisProperties} (iii) and the exact triangle \ref{fundamentalResolutionTriangle}; see \cite[5.3.7 and 5.3.9]{KarlThesis} for details.

\begin{corollary}
Suppose that $Y$ is smooth and $X$ is a reduced subscheme of $Y$.  Let $\pi : \tld Y \rightarrow Y$ be an embedded resolution of $X$ in $Y$ with simple normal crossings reduced exceptional divisor $E' \subset \tld Y$.  Let $\tld X$ denote the strict transform of $X$ and suppose that $E'$ meets $\tld X$ transversally in a simple normal crossings divisor on $\tld X$.  If $E = (\pi^{-1}(X))_{\red}$ then $R \pi_* \O_E \qis \DuBois{X}$.
\end{corollary}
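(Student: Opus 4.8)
The strategy is to verify the hypotheses of Theorem \ref{AlternateDuBoisCharacterization} for the morphism $\pi : \tld Y \to Y$ and then read off the conclusion. Both $Y$ and $\tld Y$ are smooth, hence have rational singularities, and an embedded resolution of $X$ in $Y$ may be taken to be an isomorphism over $Y \setminus X$ (it is produced by blowing up centers lying over $X$), so $\pi$ is proper birational and an isomorphism outside $X$. The only condition left to check is that the reduced pre-image $E = (\pi^{-1}(X))_{\red}$ has Du Bois singularities. Once this is established, Theorem \ref{AlternateDuBoisCharacterization} applies verbatim and gives $R\pi_*\O_E \qis \DuBois{X}$, which is exactly the assertion.

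So the real work is to show that $E$ is Du Bois. First I would identify $E$ as a set: since $\pi$ is an isomorphism off $X$, the exceptional divisor $E'$ lies over $X$, and the support of $\pi^{-1}(X)$ is $\tld X \cup E'$; thus $E = \tld X \cup E'$ equipped with its reduced structure. Now I would apply the gluing lemma \ref{GluingDuBois} with $X_1 = \tld X$ and $X_2 = E'$. The strict transform $\tld X$ is smooth, hence Du Bois; the reduced exceptional divisor $E'$ has simple normal crossings, hence is Du Bois by \ref{TheoremOldDuBoisProperties}(iii); and by hypothesis $\tld X \cap E'$ is a simple normal crossings divisor on $\tld X$, so it is reduced and (again by \ref{TheoremOldDuBoisProperties}(iii)) Du Bois. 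Lemma \ref{GluingDuBois} then yields that $E = \tld X \cup E'$ has Du Bois singularities. Alternatively, transversality of $\tld X$ and $E'$ makes $\tld X \cup E'$ a simple normal crossings divisor on the smooth scheme $\tld Y$, which is directly Du Bois.

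The part that needs genuine care --- and where one must not quietly bypass the hypotheses --- is precisely the Du Bois property of $E$: one needs the scheme-theoretic intersection $\tld X \cap E'$ to be reduced, which is exactly what transversality buys, and, when $X$ has several irreducible components, one needs the embedded resolution to have separated their strict transforms so that $\tld X$ is honestly smooth rather than merely normal crossings. Both follow from standard properties of embedded resolutions together with the transversality assumption in the statement. The rest is formal: everything is packaged inside Theorem \ref{AlternateDuBoisCharacterization}, whose proof in turn only uses the quasi-isomorphisms $\O_Y \qis \DuBois{Y}$ and $\O_{\tld Y} \qis \DuBois{\tld Y}$ coming from smoothness, together with the map of exact triangles in Proposition \ref{KeyTriangles}.
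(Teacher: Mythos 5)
Your proposal is correct and follows the paper's own argument exactly: decompose $E = \tld X \cup E'$, check that both pieces and their intersection are reduced and Du Bois, invoke Lemma \ref{GluingDuBois} to conclude $E$ is Du Bois, and then apply Theorem \ref{AlternateDuBoisCharacterization}. You simply spell out the details (smoothness of $\tld X$, the role of transversality in making $\tld X \cap E'$ reduced) that the paper leaves implicit.
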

\begin{proof}
Note that $E$ is the union of two reduced schemes with Du Bois singularities that intersect in a reduced scheme with Du Bois singularities, and thus it is Du Bois by \ref{GluingDuBois}.  Apply \ref{AlternateDuBoisCharacterization}.
\end{proof}

\begin{corollary}
\label{EasyDuBoisCriterion}
Let $X$ be a reduced separated scheme of finite type over a field of characteristic zero.  Suppose that $X \subseteq Y$ where $Y$ is smooth and suppose that $\pi : \tld Y \rightarrow Y$ is a log resolution of $X$ in $Y$ that is an isomorphism outside of $X$.  If $E$ is the reduced pre-image of $X$ in $\tld Y$, then $X$ has Du Bois singularities if and only if the natural map $\O_X \rightarrow R \pi_* \O_E$ is a quasi-isomorphism.
\end{corollary}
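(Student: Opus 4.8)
The plan is to derive this corollary directly from Theorem \ref{AlternateDuBoisCharacterization} together with the definition of Du Bois singularities. First I would observe that all the hypotheses of Theorem \ref{AlternateDuBoisCharacterization} are satisfied in the present setting: we take $Y$ itself (which is smooth, hence has rational singularities) as the base, $\tld Y$ is smooth by the definition of a log resolution (so it has rational singularities), and $E = (\pi^{-1}(X))_{\red}$ is a simple normal crossings divisor, hence has Du Bois singularities by \ref{TheoremOldDuBoisProperties}(iii) (the first map $\O_E \to \DuBois{E}$ is a quasi-isomorphism for simple normal crossing singularities). Finally $\pi$ is an isomorphism outside $X$ by hypothesis. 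Theorem \ref{AlternateDuBoisCharacterization} then yields a quasi-isomorphism $\DuBois{X} \qis R\pi_* \O_E$.

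Next I would track the natural maps to make sure the quasi-isomorphism is compatible with the map $\O_X \to R\pi_*\O_E$ appearing in the statement. The point is that there is a commutative triangle
\[
\xymatrix{
\O_X \ar[r] \ar[rd] & \DuBois{X} \ar[d] \\
& R\pi_* \O_E \\
}
\]
in which the vertical arrow is the quasi-isomorphism just produced (more precisely, the isomorphism in the derived category coming from the identification of the right-hand vertical arrows in the map of triangles in the proof of \ref{AlternateDuBoisCharacterization}), the horizontal arrow is the canonical map of \ref{TheoremOldDuBoisProperties}(iii), and the diagonal is the natural map in the statement, which is the composition $\O_X \to \DuBois{X} \to R\pi_*\O_E$ — this is exactly the assertion of \ref{TheoremOldDuBoisProperties}(iii) that the composition of the two natural maps is the usual one. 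Given this triangle, $\O_X \to \DuBois{X}$ is a quasi-isomorphism if and only if $\O_X \to R\pi_*\O_E$ is, since the two differ only by composing with the isomorphism $\DuBois{X} \qis R\pi_*\O_E$.

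Putting these together: if $X$ has Du Bois singularities then $\O_X \to \DuBois{X}$ is a quasi-isomorphism by definition, and composing with the isomorphism $\DuBois{X} \qis R\pi_*\O_E$ shows $\O_X \to R\pi_*\O_E$ is a quasi-isomorphism. Conversely, if $\O_X \to R\pi_*\O_E$ is a quasi-isomorphism, then since it factors through $\O_X \to \DuBois{X} \to R\pi_*\O_E$ with the second arrow an isomorphism, the first arrow $\O_X \to \DuBois{X}$ is a quasi-isomorphism, i.e. $X$ has Du Bois singularities.

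The main thing to be careful about — and where I expect the only real subtlety to lie — is the compatibility of the abstract derived-category isomorphism $\DuBois{X} \qis R\pi_*\O_E$ with the specific natural maps from $\O_X$; one has to make sure the isomorphism extracted from the proof of \ref{AlternateDuBoisCharacterization} is the one induced by the map of exact triangles there (whose rightmost vertical arrow is $\DuBois{X} \to R\pi_*\O_E$, the functoriality map of \ref{TheoremOldDuBoisProperties}(ii) composed appropriately), rather than some other abstract isomorphism. Once that identification is in hand the argument is purely formal. This is essentially spelled out in \cite[5.3.7]{KarlThesis}, so in the write-up I would simply invoke Theorem \ref{AlternateDuBoisCharacterization} and the definition, remarking that the natural map $\O_X \to R\pi_*\O_E$ factors through $\DuBois{X}$.
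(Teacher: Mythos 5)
Your proposal is correct and follows exactly the route the paper intends: the corollary is stated without proof as an immediate consequence of Theorem \ref{AlternateDuBoisCharacterization} (whose hypotheses, as the preceding remark notes, are satisfied when $Y$ is smooth and $\pi$ is a strong log resolution) together with the definition of Du Bois singularities. Your attention to the compatibility of the natural map $\O_X \to R\pi_*\O_E$ with the factorization through $\DuBois{X}$ via \ref{TheoremOldDuBoisProperties}(iii) is the right point to check and is handled correctly.
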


\begin{corollary}
With the above notation, $R \pi_* \O_E$ (as an object in $D^b_{\coherent}(X)$) is independent of the choice of embedding or resolution, up to quasi-isomorphism.
\end{corollary}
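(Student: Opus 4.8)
The plan is to deduce this immediately from Theorem~\ref{AlternateDuBoisCharacterization} together with the fact that $\DuBois{X}$ is intrinsic to $X$. First I would verify that the hypotheses of Theorem~\ref{AlternateDuBoisCharacterization} hold in the situation at hand: since $Y$ is smooth it has rational singularities; $\pi : \tld Y \to Y$ is a strong log resolution of $X$ in $Y$, so $\tld Y$ is smooth and in particular has rational singularities; and the reduced pre-image $E$ of $X$ is a simple normal crossings divisor, hence has Du Bois singularities by Theorem~\ref{TheoremOldDuBoisProperties}(iii). Thus Theorem~\ref{AlternateDuBoisCharacterization} applies and yields a quasi-isomorphism $R \pi_* \O_E \qis \DuBois{X}$ in $D^b_{\coherent}(X)$, where $R\pi_*\O_E$ is understood as $R\pi'_*\O_E$ via the induced map $\pi' : E \to X$.

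Next I would invoke the fact recalled at the beginning of Section~\ref{RelevantProperties}: the filtered complex $(\FullDuBois{X},F)$, and in particular its zeroth graded piece $\DuBois{X}$, is built from a (simplicial or cubic) hyperresolution of $X$ and is independent of that choice up to (filtered) quasi-isomorphism. In other words, $\DuBois{X} \in D^b_{\coherent}(X)$ is an invariant of the scheme $X$ alone, making no reference to an embedding into an ambient scheme or to a choice of resolution.

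Combining the two observations, given any two choices of a smooth ambient scheme $Y_i \supseteq X$ together with a log resolution $\pi_i : \tld Y_i \to Y_i$ of $X$ in $Y_i$ that is an isomorphism outside $X$, with reduced pre-image $E_i$, one obtains
\[
R(\pi_1)_*\O_{E_1} \qis \DuBois{X} \qis R(\pi_2)_*\O_{E_2}
\]
in $D^b_{\coherent}(X)$, which is exactly the assertion. There is essentially no serious obstacle here; the only points meriting (minor) care are confirming that Theorem~\ref{AlternateDuBoisCharacterization} genuinely applies under the present hypotheses and that the comparison is taking place in $D^b_{\coherent}(X)$ rather than merely on the ambient scheme $Y$, both of which are routine.
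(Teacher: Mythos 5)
Your proposal is correct and is precisely the argument the paper intends (the corollary is stated without proof as an immediate consequence of Theorem \ref{AlternateDuBoisCharacterization} combined with the intrinsic nature of $\DuBois{X}$). The verification of the hypotheses and the appeal to the independence of $\DuBois{X}$ from the choice of hyperresolution are exactly the right points to make explicit.
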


\begin{remark}
Using the above notation, it is easy to see that $\pi_* \O_E$ is the structure sheaf of the seminormalization (see \cite{GrecoTraversoSeminormal}) of $X$ when the field
being worked over is algebraically closed, \cite[5.4.17]{KarlThesis}.  This fact about Du Bois singularities and seminormality was also proven in \cite[5.2.2]{SaitoMixedHodge}.  Faithfully flat base change then allows us to note the following:
If $X$ is a reduced separated scheme of finite type over \emph{any} field of characteristic zero, then $X$ has Du Bois singularities if and only if $X$ is seminormal and $R^i \pi_* \O_E = 0$ for $i > 0$.  See \cite{SchwedeFInjectiveAreDuBois} for additional discussion of this issue.
\end{remark}

One limitation of this characterization is that it requires $\pi$ to be an isomorphism outside of $X$.  In applications, this can be a restrictive condition.  The following result allows us to weaken this hypothesis.

\begin{theorem}
\label{DoubleLogResolution}
Suppose that $Y$ is a smooth scheme of finite type over a field of characteristic zero, $X$ is a reduced closed subscheme of $Y$, and $\sI_X$ the ideal of $X$ in $Y$.  Suppose $\ba$ is any other sheaf of ideals on $Y$. Then there exists a simultaneous log resolution $\pi_2 : Y_2 \rightarrow Y$ of $\sI_X$ and $\ba$ such that if $E_2 = \pi_2^{-1}(X)$ is the reduced pre-image of $X$, then $R (\pi_2)_* \O_{E_2} \qis \DuBois{X}$.
\end{theorem}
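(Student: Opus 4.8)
The plan is to reduce the statement to Theorem~\ref{AlternateDuBoisCharacterization} by separating the resolution of $\sI_X$ from the resolution of $\ba$. First I would choose a strong log resolution $\pi_1 : Y_1 \to Y$ of $\sI_X$; this exists because $Y$, being smooth, is smooth away from $X$. Writing $E_1 := \pi_1^{-1}(X)_{\red}$ we obtain a simple normal crossings divisor in the smooth scheme $Y_1$, and since $\pi_1$ is an isomorphism outside $X$, the remark following Theorem~\ref{AlternateDuBoisCharacterization} yields $\DuBois{X} \qis R(\pi_1)_* \O_{E_1}$; in particular $E_1$ has Du Bois (indeed simple normal crossings) singularities. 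Next, working over $Y_1$, I would invoke Hironaka to choose a log resolution $\pi' : Y_2 \to Y_1$ of $\ba\O_{Y_1}$ that is compatible with the boundary $E_1$, in the sense that it is a composition of blow-ups along smooth centers each in simple normal crossings position with the total transform of $E_1$ and the previously created exceptional divisors, and such that the union of the exceptional locus of $\pi'$, the divisor of $\ba\O_{Y_2}$, and $(\pi')^{-1}(E_1)_{\red}$ is simple normal crossings. Then $\pi_2 := \pi_1 \circ \pi'$ is a simultaneous log resolution of $\sI_X$ and $\ba$, and, because $\pi_1$ is an isomorphism outside $X$, the reduced preimage $E_2 := \pi_2^{-1}(X)_{\red}$ equals $(\pi')^{-1}(E_1)_{\red}$.

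Factoring $\pi_2|_{E_2} = (\pi_1|_{E_1}) \circ (\pi'|_{E_2})$ and applying $R(\pi_1|_{E_1})_*$, a quasi-isomorphism $R(\pi'|_{E_2})_* \O_{E_2} \qis \O_{E_1}$ would give $R(\pi_2)_* \O_{E_2} \qis R(\pi_1)_* \O_{E_1} \qis \DuBois{X}$, as desired. So everything comes down to the claim that $R(\pi'|_{E_2})_* \O_{E_2} \qis \O_{E_1}$ --- morally, that the derived pushforward of the structure sheaf of a simple normal crossings divisor is insensitive to a birational modification of the ambient smooth scheme, provided the reduced preimage of the divisor remains simple normal crossings. (This is a fibered version of the fact, recorded just after Corollary~\ref{EasyDuBoisCriterion}, that $R\pi_*\O_E$ is independent of the embedding and the resolution, but there the modification is an isomorphism off the divisor.) To prove the claim I would localize on $Y_1$ and induct on the number of blow-ups comprising $\pi'$, reducing to a single blow-up $\beta : W' \to W$ along a smooth center $C$ in simple normal crossings position with a reduced simple normal crossings divisor $D \subset W$, where one must show $R(\beta|_{D'})_* \O_{D'} \qis \O_D$ for $D' := \beta^{-1}(D)_{\red}$. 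When $C \subseteq D$ this is immediate: $\beta$ is then an isomorphism outside $D$, $D$ is Du Bois because it is simple normal crossings, and Theorem~\ref{AlternateDuBoisCharacterization} applied to $D \subseteq W$ with $\pi = \beta$ gives $R(\beta|_{D'})_* \O_{D'} \qis \DuBois{D} \qis \O_D$. The remaining case, $C$ meeting $D$ but not contained in it, I would handle by decomposing $D$ into its irreducible components and $D'$ into the strict transform of $D$ (a resolution of the \emph{smooth} scheme $D$, hence with $R$-pushforward $\O_D$) together with the part of the exceptional divisor of $\beta$ lying over $D$, and then running a Mayer--Vietoris argument built on the gluing Lemma~\ref{GluingDuBois}.

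The main obstacle is exactly this claim: one must show that the extra blow-ups needed to make $\ba$ invertible do not change $R\pi_*$ of the structure sheaf of $E_1$. An alternative route, which makes the reduction to Theorem~\ref{AlternateDuBoisCharacterization} even more direct, is to take $\pi'$ to be a strong log resolution of the ideal $\ba\O_{Y_1} + \sI_{E_1}$ (the ideal sheaf of $E_1$ in $Y_1$), which is an isomorphism outside $V(\ba\O_{Y_1}) \cap E_1 \subseteq E_1$, hence outside $E_1$; then Theorem~\ref{AlternateDuBoisCharacterization} applies to $E_1 \subseteq Y_1$ directly and gives $R(\pi'|_{E_2})_*\O_{E_2} \qis \DuBois{E_1} \qis \O_{E_1}$. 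The cost of this route is that one must still verify that such a resolution renders $\ba\O_{Y_1}$ itself invertible --- or enlarge it minimally so that it does, without disturbing the generic behavior along $E_1$ --- in order for $\pi_2$ to qualify as a simultaneous log resolution of $\sI_X$ and $\ba$. Either way, the heart of the matter is controlling the interaction between the exceptional divisor $E_1$ of the first resolution and the modifications performed in the second.
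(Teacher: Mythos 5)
Your proposal has the same skeleton as the paper's proof: first a strong log resolution $\pi_1$ of $\sI_X$ so that Theorem \ref{AlternateDuBoisCharacterization} gives $\DuBois{X} \qis R(\pi_1)_*\O_{E_1}$; then a log resolution of $\ba\O_{Y_1}$ built from blow-ups whose smooth centers are in simple normal crossings position with the preimage of $E_1$ at every step; then a reduction to a single such blow-up split into the cases $C \subseteq E_1$ and $C \nsubset E_1$, the first handled exactly as in the paper. There are two places where you diverge or are too casual. First, the existence of the compatible resolution is not a bare consequence of ``Hironaka'': the paper isolates this as Lemma \ref{HowardThompsonSuggestion} and proves it by hijacking the resolution of marked ideals/basic objects, because the standard log resolution statement does not guarantee that the centers are snc with a prescribed divisor at \emph{every} intermediate stage; your argument needs exactly this input and should cite it. Second, for $C \nsubset E_1$ the paper argues in one line: the center not being contained in $E_1$ forces $\pi_{1a}^* E_1 = E_{1a}$, so the projection formula together with rationality of the blow-up gives $\O_{Y_1}(-E_1) \qis R (\pi_{1a})_* \O_{Y_{1a}}(-E_{1a})$, and the claim for the structure sheaves of the divisors follows from the evident triangles. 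Your Mayer--Vietoris alternative can be made to work, but as stated it is slightly off: when $C$ is snc with $D$ and $C \nsubset D$, the reduced preimage $D'$ is just the strict transform of $D$ (the fibers of the exceptional divisor over $C \cap D$ have codimension at least two and already lie in the strict transform), and $D$ is snc rather than smooth, so the argument must be run component by component, each $\tld D_{i_1} \cap \cdots \cap \tld D_{i_k}$ being a blow-up of a smooth variety along a smooth center; moreover Lemma \ref{GluingDuBois} is not the right engine here --- what you need is compatibility of $R\beta_*$ with the Mayer--Vietoris resolution of $\O_{D'}$, not a statement about Du Bois-ness of unions. Finally, your ``alternative route'' via a strong log resolution of $\ba\O_{Y_1} + \sI_{E_1}$ has the gap you yourself flag (it does not make $\ba\O_{Y_2}$ invertible) and should be dropped in favor of the main argument.
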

\begin{proof}
Begin by taking a strong log resolution $\pi_1 : Y_1 \rightarrow Y$ of $\sI_X$ and consider $\ba_1 = \ba \O_{Y_1}$.  Let $E_1 = \pi_1^{-1}(X)$.  We claim that if we can construct a log resolution of $\ba_1$ by repeated blow-ups at smooth centers and such that the reduced pre-image of $E_1$  (which is the reduced pre-image of $X$) has simple normal crossings at each step; this would prove the proposition.  To see this, suppose $\pi_{1a} : Y_{1a} \rightarrow Y_1$ is the blow-up at a smooth center $C$ such that $E_{1a} = (\pi_{1a}^{-1}(E_1))_{\red}$ has simple normal crossings.  We claim that then $R (\pi_{1a})_* \O_{E_{1a}} \qis \O_{E_1}$.  There are two cases:
\begin{itemize}
\item[(1)]  $C \subset E_1$
\item[(2)]  $C \nsubset E_1$
\end{itemize}
In the first case, $R (\pi_{1a})_* \O_{E_{1a}} \qis \DuBois{E_1}$ by \ref{AlternateDuBoisCharacterization}, but simple normal crossing singularities have Du Bois singularities, so there is nothing to check.

In the second case, we need to check that $R (\pi_{1a})_* E_{1a} \qis \O_{E_1}$.  Because $Y_1$ has rational singularities, this is equivalent to showing that the natural map
\[
\O_{Y_1}(-E_1) \rightarrow R (\pi_{1a})_* \O_{Y_{1a}}(-E_{1a})
\]
is a quasi-isomorphism.  However, since the smooth center we blew-up was not contained in $E_1$, $\pi_{1a}^*(E_1) = E_{1a}$.  Thus we see that
\[
\O_{Y_1}(-E_1) \rightarrow R (\pi_{1a})_* (\O_{Y_{1a}}(-E_{1a}))
\]
is an isomorphism by an application of the projection formula (again we use that $Y_1$ has rational singularities).

By repeating this argument, we see that if $\pi_2 : Y_2 \rightarrow Y_1$ is a log resolution of $\ba_1$ obtained in this way, $R (\pi_2)_* \O_{E_2} \qis \O_{E_1}$.

To show that such a sequence of blow-ups exists, we need the following lemma which is actually an easy corollary of an algorithmic desingularization theorem; see for example \cite{WlodarczykResolution}, \cite{BravoEncinasVillmayorSimplified}.

\begin{lemma}
\label{HowardThompsonSuggestion}
Suppose $Y_1$ is a smooth scheme of finite type over a field of characteristic zero, $\ba_1$ is an ideal sheaf on $Y_1$ and $E_1$ a simple normal crossings divisor on $Y_1$.  Then there exists a log resolution $\pi : Y_2 \rightarrow Y_1$ obtained by successive blow-ups of smooth centers that have simple normal crossings with the pre-image of $E_1$ at \emph{every} step.
\end{lemma}
I would like to thank Howard Thompson for suggesting the ``hijacking'' of the resolution of marked ideals/basic objects used to prove this lemma.
\begin{proof}
In \cite{WlodarczykResolution} (respectively \cite{BravoEncinasVillmayorSimplified}), instead of simply resolving ideals, for recursive purposes one resolves a tuple $(Y_1, \ba_1, E_1, 1)$ called a marked ideal (respectively $(Y_1, (\ba_1, 1), E_1)$ called a basic object).  A \emph{resolution} of one of these objects is a chain of blow-ups at smooth centers such that the centers have simple normal crossings with the $E_i$ object, and furthermore, the $E_i$-term only changes by adding the new exceptional set at each stage; see \cite[2.13]{WlodarczykResolution} (respectively \cite[2.3, 3.6]{BravoEncinasVillmayorSimplified}).  \emph{Resolutions} of marked ideals (respectively, basic objects) exist by \cite[4.0.1]{WlodarczykResolution} (respectively, \cite[3.10]{BravoEncinasVillmayorSimplified}), which completes the proof of the lemma.
\end{proof}

Now we return to the proof of \ref{DoubleLogResolution}.  Since the centers at each stage have simple normal crossings with the pre-image of $E_1$, we see that this lemma completes the proof of the proposition.
\end{proof}

Additional generalizations that do not require the precise control of the resolution would be desirable; see section \ref{Questions}.

\section{Applications}

In this section we apply our results to other problems in birational geometry.

The following theorem was inspired by an analogous result of Fedder and Watanabe involving $F$-injective and $F$-rational singularities; see \cite[2.13]{FedderWatanabe}.  A similar statement involving semi-log canonical and canonical singularities can be found in \cite[5.1]{KollarShepherdBarron} and a further generalization can be found in \cite[2.5]{KaruBoundedness}.

\begin{theorem}
\label{DuBoisImpliesRational}
Suppose $X$ is a reduced scheme of finite type over a field of characteristic zero, $H$ is a Cartier divisor that has Du Bois singularities, and $X - H$ is smooth.  Then $X$ has rational singularities (in particular, it is Cohen-Macaulay).
\end{theorem}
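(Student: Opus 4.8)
The plan is to localize, reduce the assertion to a statement about $H$ by a Nakayama argument across the Cartier divisor $H$, and then feed in that $H$ is Du Bois through the characterization of Theorem \ref{EasyDuBoisCriterion}. Since having rational singularities is local, I may assume $X$ is affine. As $X-H$ is smooth and $H$ is a reduced Cartier divisor in $X$, I can choose a resolution $\rho:\tld X\rightarrow X$ that is an isomorphism over $X-\mathrm{Sing}(X)$ — hence over $X-H$, and in particular over the generic points of the components of $H$ (these are regular points of $X$: the local ring there has dimension one with principal maximal ideal, since $H$ is reduced Cartier) — and such that $F:=(\rho^{-1}(H))_{\red}$ is a simple normal crossings divisor on the smooth scheme $\tld X$. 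The goal is that the natural map $\O_X\rightarrow R\rho_*\O_{\tld X}$ is a quasi-isomorphism.

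First, the reduction to $H$. Put $\mathcal{C}=\mathrm{cone}(\O_X\rightarrow R\rho_*\O_{\tld X})\in D^b_{\coherent}(X)$. Since $\rho$ is an isomorphism over $X-H$, the cohomology sheaves of $\mathcal{C}$ are supported on $H$ and vanish in negative degrees. Let $h$ be a local equation of $H$, a nonzerodivisor on $X$, so that $\O_X/h\qis[\O_X\xrightarrow{\,h\,}\O_X]$; the projection formula then identifies $\mathcal{C}\otimes^L_{\O_X}\O_X/h$ with $\mathrm{cone}(\O_H\rightarrow R\rho_*\O_{\rho^*H})$, where $\rho^*H$ denotes the (Cartier, but possibly non-reduced) scheme-theoretic preimage. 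If this cone vanishes, then looking at the top nonvanishing cohomology sheaf of $\mathcal{C}$, which is coherent and supported on $V(h)$ and hence killed by a power of $h$, a descending Nakayama argument forces $\mathcal{C}=0$ — that is, $X$ has rational singularities. So it suffices to prove that $\O_H\rightarrow R\rho_*\O_{\rho^*H}$ is a quasi-isomorphism.

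The remaining step, which is the heart of the matter, splits in two. First, $R\rho_*\O_F\qis\DuBois{H}$, so (because $H$ is Du Bois) $\O_H\rightarrow R\rho_*\O_F$ is a quasi-isomorphism; this is where the characterization enters, and to compute $\DuBois{H}$ along a resolution coming from $X$ — rather than a log resolution of $H$ in a smooth ambient, which is not an isomorphism away from $H$ — one invokes an argument in the spirit of Theorem \ref{DoubleLogResolution}. Second, one passes from the reduced preimage $F$ to the full preimage $\rho^*H$: the nilpotent ideal sheaf $\mathcal{N}:=\ker(\O_{\rho^*H}\twoheadrightarrow\O_F)$ should satisfy $R\rho_*\mathcal{N}=0$. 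Filtering $\mathcal{N}$ by its powers, the graded pieces $\mathcal{N}^k/\mathcal{N}^{k+1}$ are line bundles supported on the union of the components of $F$ occurring in $\rho^*H$ with multiplicity $\geq k+1$, twisted by an anti-effective divisor; since $\rho$ was taken to be an isomorphism at the generic points of $H$, all such components are $\rho$-exceptional, and a relative Kawamata–Viehweg-type vanishing (equivalently, local vanishing for multiplier ideals, Theorem \ref{MultiplierIdealVanishing}, applied on the smooth scheme $\tld X$) kills their higher direct images. Combining, $R\rho_*\O_{\rho^*H}\qis R\rho_*\O_F\qis\O_H$, compatibly with the structure maps, which completes the proof.

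The main obstacle is precisely this last step: arranging a single resolution of $X$ whose restriction over $H$ computes $\DuBois{H}$ — for which one needs the circle of ideas behind Theorem \ref{DoubleLogResolution} — and controlling the nilpotent thickening $\rho^*H/F$, where some care is needed because $X$ is not assumed $\bQ$-Gorenstein, so the vanishing must be applied upstairs on $\tld X$ rather than to a multiplier ideal on $X$ itself. One can also organize the whole argument so as never to resolve $X$ directly, instead working on a simultaneous log resolution $\pi:\tld Y\rightarrow Y$ of $\sI_X$ and $\sI_H$ in a smooth ambient $Y$ and comparing $\DuBois{X}\qis R\pi_*\O_{\pi^{-1}(X)_{\red}}$ with $\DuBois{H}\qis R\pi_*\O_{\pi^{-1}(H)_{\red}}$ via Theorems \ref{AlternateDuBoisCharacterization} and \ref{DoubleLogResolution}.
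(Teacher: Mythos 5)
Your Nakayama reduction is fine as far as it goes: the cone $\mathcal{C}$ of $\O_X\rightarrow R\rho_*\O_{\tld X}$ vanishes if and only if $\mathcal{C}\otimes^L_{\O_X}\O_X/h\qis \mathrm{cone}(\O_H\rightarrow R\rho_*\O_{\rho^*H})$ does. But the two claims this defers to are exactly where the content lies, and neither is established. The serious one is Step A, the assertion $R\rho_*\O_F\qis\DuBois{H}$ for a resolution $\rho$ of the \emph{singular} scheme $X$. Theorem \ref{AlternateDuBoisCharacterization} requires the ambient scheme to have rational singularities, and here the ambient is $X$ itself --- precisely the conclusion you are trying to prove --- while the proof of Theorem \ref{DoubleLogResolution} likewise uses rationality of the ambient at every stage of the blow-up sequence, so neither can be invoked ``in spirit.'' In fact, applying Proposition \ref{KeyTriangles} to $\rho$ with $\Sigma=H$ gives a map of triangles whose left vertical arrow is the identity on $R\rho_*\sI_F$, so the cone of $\DuBois{H}\rightarrow R\rho_*\O_F$ is identified with the cone of $\DuBois{X}\rightarrow R\rho_*\O_{\tld X}$; since $X$ is Du Bois here (by \cite[3.2]{KovacsDuBoisLC2}, as $H$ is a Du Bois Cartier divisor and $X-H$ is smooth), your Step A is \emph{equivalent} to $X$ having rational singularities. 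The argument is circular. Step B is also unsupported: to kill the nilpotents $\mathcal{N}$ you need $R^j\rho_*$ of the graded pieces to vanish for \emph{all} $j$, including $j=0$, whereas local vanishing (Theorem \ref{MultiplierIdealVanishing}) only kills higher direct images, and only of sheaves carrying the $K_{\tld X/X}$ twist. Your closing suggestion of working entirely in a smooth ambient $Y$ does not repair this: it computes $\DuBois{X}$ and $\DuBois{H}$ as pushforwards from a log resolution of $Y$, but rational singularities of $X$ is a statement about $R\rho_*\O_{\tld X}$ for a resolution of $X$, and Du Bois does not imply rational in general, so some further mechanism is needed to transfer the information.

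The paper supplies exactly that mechanism, and it is worth seeing why it avoids the circle. It applies Proposition \ref{KeyTriangles} to a strong log resolution $\pi:\tld X\rightarrow X$ of $(X,H)$ (possible because $X-H$ is smooth) to obtain the triangle $R\pi_*\O_{\tld X}(-\overline H)\rightarrow\DuBois{X}\rightarrow\DuBois{H}\rightarrow {+1}$, then uses $\DuBois{X}\qis\O_X$ and $\DuBois{H}\qis\O_H$ to conclude $R\pi_*\O_{\tld X}(-\overline H)\qis f\O_X$. The point is that one never proves $R\pi_*\O_{\tld X}\qis\O_X$ directly; instead the containment $f\O_{\tld X}\subseteq\O_{\tld X}(-\overline H)$ yields a composition $f\O_X\rightarrow R\pi_*(f\O_{\tld X})\rightarrow R\pi_*\O_{\tld X}(-\overline H)\qis f\O_X$ that is a quasi-isomorphism, and untwisting by $f\O_X\cong\O_X$ and $f\O_{\tld X}\cong\O_{\tld X}$ produces a left inverse of $\O_X\rightarrow R\pi_*\O_{\tld X}$, so Theorem \ref{KovacsRationalCriterion} applies. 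Some such splitting device is the ingredient missing from your outline; the reduction modulo $h$ by itself only restates the problem.
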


Our proof of this theorem relies on the following result of Kov\'acs.

\begin{theorem}[{\cite{KovacsRat}}]
\label{KovacsRationalCriterion}
Suppose that $X$ and $\tld X$ are reduced irreducible separated schemes of finite type over a field of characteristic zero $k$.  Further suppose that $\pi : \tld X \rightarrow X$ is a map over $k$ and that $\tld X$ has rational singularities.  If the natural map $\O_X \rightarrow R \pi_* \O_{\tld X}$ has a left inverse (that is, there exists a map $R \pi_* \O_{\tld X} \rightarrow \O_X$ such that the composition, $\O_X \rightarrow R \pi_* \O_{\tld X} \rightarrow \O_X$ is a quasi-isomorphism), then $X$ has rational singularities as well.
\end{theorem}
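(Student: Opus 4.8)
The plan is to reduce the statement to the case where the source is a genuine resolution of $X$, and then to treat that case by dualizing the splitting and invoking Grauert--Riemenschneider vanishing together with Kempf's criterion for rational singularities. Throughout I take $\pi$ to be proper (which is implicit here, and is needed for the coherence of $R\pi_* \O_{\tld X}$ and for the duality arguments below). Write $n = \dim X$ and let $\omega_X^{\mydot}$ denote the dualizing complex of $X$, normalized so that $\omega_Z^{\mydot} \qis \omega_Z[\dim Z]$ when $Z$ is smooth.

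First I would carry out the reduction. Resolve $\tld X$ by $\sigma : W \to \tld X$; since $\tld X$ has rational singularities, $R\sigma_* \O_W \qis \O_{\tld X}$, so with $g = \pi \circ \sigma : W \to X$ we have $Rg_* \O_W \qis R\pi_* \O_{\tld X}$ and the natural map $\O_X \to Rg_* \O_W$ still admits a left inverse. Now fix any resolution $\rho : X' \to X$ and let $Z$ be a resolution of the dominant component of $W \times_X X'$, with induced maps $q : Z \to W$ and $p : Z \to X'$ and $h := g \circ q = \rho \circ p$. Because $\rho$ is an isomorphism over a dense open $U \subseteq X$, the first projection is an isomorphism over the dense open $g^{-1}(U) \subseteq W$, so $q$ is proper birational and $Rq_* \O_Z \qis \O_W$; hence $Rh_* \O_Z \qis Rg_* \O_W$ and $\O_X \to Rh_* \O_Z$ has a left inverse. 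Factoring this natural map as $\O_X \to R\rho_* \O_{X'} \to R\rho_* Rp_* \O_Z = Rh_* \O_Z$ shows that the left inverse of $\O_X \to Rh_* \O_Z$ precomposes with $R\rho_*\O_{X'} \to Rh_*\O_Z$ to give a left inverse of the natural map $\O_X \to R\rho_* \O_{X'}$. This reduces everything to the following statement: if $\rho : X' \to X$ is a resolution and $\O_X \to R\rho_* \O_{X'}$ has a left inverse, then $X$ has rational singularities.

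To prove this resolution case, I would first extract normality. Taking $h^0$ of the splitting exhibits $\O_X$ as an $\O_X$-module direct summand of $\rho_* \O_{X'}$, which by Zariski's main theorem is the normalization of $X$; but a finite birational extension of the reduced irreducible ring $\O_X$ that splits off $\O_X$ as a module must be trivial (if $r$ is the retraction, $b$ lies in the normalization, and $c$ is a nonzerodivisor in the conductor, then $c(b - r(b)) = 0$, whence $b = r(b) \in \O_X$), so $\O_X = \rho_* \O_{X'}$ and $X$ is normal. Next I would dualize the split injection $\O_X \to R\rho_* \O_{X'}$ by applying $R\mathcal{H}om_X(-, \omega_X^{\mydot})$. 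Grothendieck duality for $\rho$ (see \cite{HartshorneResidues}) identifies $R\mathcal{H}om_X(R\rho_* \O_{X'}, \omega_X^{\mydot})$ with $R\rho_* \omega_{X'}^{\mydot} = R\rho_* \omega_{X'}[n]$, and Grauert--Riemenschneider vanishing ($R^{>0} \rho_* \omega_{X'} = 0$) collapses this to the single sheaf $(\rho_* \omega_{X'})[n]$ placed in degree $-n$. The dual of the splitting then makes $\omega_X^{\mydot}$ a direct summand of $(\rho_* \omega_{X'})[n]$; being a summand of a complex concentrated in degree $-n$, $\omega_X^{\mydot}$ is itself concentrated there, which says exactly that $X$ is Cohen--Macaulay. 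Moreover the dual of the natural map $\O_X \to R\rho_* \O_{X'}$ is the trace map $\rho_* \omega_{X'} \to \omega_X$, and the dual of the left inverse provides a section of it, so the trace is (split) surjective. A normal, Cohen--Macaulay variety whose trace map $\rho_* \omega_{X'} \to \omega_X$ is surjective has rational singularities by Kempf's criterion (see \cite{KollarMori}), completing the argument.

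I expect the duality step to be the main obstacle: one must set up Grothendieck duality for the proper birational morphism $\rho$ correctly, verify that Grauert--Riemenschneider applies so as to collapse $R\rho_* \omega_{X'}^{\mydot}$ to a sheaf, and---most delicately---identify the dual of the natural map $\O_X \to R\rho_* \O_{X'}$ with the trace map, so that the dualized left inverse becomes an honest section of the trace. A secondary point requiring care is the reduction itself, where one must check that the retraction transported along $Rh_* \O_Z \qis Rg_* \O_W$ really is a left inverse of the natural map (i.e.\ that every arrow in sight is the canonical unit map), so that it descends to a genuine left inverse for the resolution $\rho$.
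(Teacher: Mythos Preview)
The paper does not give its own proof of this theorem; it simply cites \cite{KovacsRat} and records that the argument uses resolutions of singularities, Grothendieck duality, and Grauert--Riemenschneider vanishing. Your proposal is correct and uses precisely these ingredients in the standard way (reduce to a resolution via a fiber-product roof, then dualize the splitting and collapse $R\rho_*\omega_{X'}^{\mydot}$ with Grauert--Riemenschneider to get Cohen--Macaulayness and surjectivity of the trace), so it aligns with the approach the paper alludes to and with Kov\'acs's original argument; the only small point you leave implicit is that for normal $X$ the trace $\rho_*\omega_{X'}\to\omega_X$ is automatically injective, so ``split surjective'' already forces the isomorphism needed in Kempf's criterion.
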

The proof of Theorem \ref{KovacsRationalCriterion} relies on the existence of resolutions of singularities (see \cite{HironakaResolution}), Grothendieck duality (see \cite{HartshorneResidues}), and Grauert-Riemenschneider vanishing (see \cite{GRVanishing}).  Note that $\pi$ is not required to be birational.  Compare with \cite[2.3]{KovacsDuBoisLC1} which is stated previously in this paper in \ref{KovacsDuBoisSplitting}.

\begin{remark}
\label{KovacsRationalIrreducibleRemark}
The hypothesis that $X$ and $\tld X$ are irreducible can be removed if we require that every irreducible component of $\tld X$ dominates an irreducible component of $X$.
\end{remark}

We now prove \ref{DuBoisImpliesRational}.

\begin{proof}
Without loss of generality we may assume that $X = \Spec S$ is affine and that $H = V(f)$ where $f \in S$ is a regular element.
Using this notation, since $\Spec S[f^{-1}]$ is smooth and $f$ is a regular element, it is easy to see that $X$ is smooth in codimension 1.  Let $\pi : \tld X \rightarrow X$ be a strong log resolution of $(X, H)$ such that the strict transform $\tld H$ of $H$ is also smooth (and thus in particular, $\pi|_{\tld H}$ is a resolution of $H$).  We use  $\overline H$ to denote the reduced total transform of $H$ and we observe that it is a divisor with simple normal crossings.  Therefore $\overline H$ has Du Bois singularities.
We obtain the exact triangle
\[
\xymatrix{
R \pi_* \O_{\tld X}(-\overline H) \ar[r] & \DuBois{X} \ar[r] & \DuBois{H} \ar[r]^-{+1} & .
}
\]
from Proposition \ref{KeyTriangles}.
Now consider the following diagram
\[
\xymatrix{
f \O_X \ar[d] \\
R \pi_* f \O_{\tld X} \ar[d] \\
R \pi_* \O_{\tld X}(-\overline H) \ar[r] \ar[d] & \DuBois{X} \ar[r] \ar[dl] & \DuBois{H} \ar[r]^{+1} & \\
R \pi_* \O_{\tld X} \\
}
\]
Here we think of $f \O_{\tld X}$ as an ideal sheaf (and one that is certainly contained in $\O_{\tld X}(-\overline H)$ since the reduced divisor corresponding to $f \O_{\tld X}$ is just $-\overline H$).
Since $H$ is Du Bois, $\DuBois{H} \qis \O_H$.  But since $X$ is Du Bois outside of $H$, $X$ is itself Du Bois \cite[3.2]{KovacsDuBoisLC2}, and in particular $\DuBois{X} \qis \O_X$.  Therefore $R \pi_* \O_{\tld X}(-\overline H) \qis f \O_X$.  Finally, consider the image of $f \O_X$ in $R \pi_* \O_{\tld X}$.  Since $R \pi_* \O_{\tld X}(-\overline H) \rightarrow R \pi_* \O_{\tld X}$ factors through $\DuBois{X}$, we see the image of $R \pi_* \O_{\tld X}(-\overline H)$ also agrees with that of $f \O_X$ in $R \pi_* \O_{\tld X}$.
This gives us the following composition
\[
\xymatrix{
f \O_X \ar[r] & R \pi_* (f \O_{\tld X}) \ar[r] & f \O_X
}
\]
which is a quasi-isomorphism.  Finally, abstractly (but compatibly), $f \O_X \cong \O_X$ and $f \O_{\tld X} \cong \O_{\tld X}$, which gives us the desired result by \ref{KovacsRationalCriterion}.
\end{proof}

\begin{remark}
If \ref{LogRationalificationsQuestion} holds (or if certain natural generalizations to \ref{QuestionLogResolutionGeneralization} hold), then one could replace the condition that $X-H$ is smooth with the condition that $X-H$ has rational singularities.
\end{remark}

Our alternate characterization of Du Bois singularities \ref{AlternateDuBoisCharacterization} and related results in \cite{TakagiInversion} (see \ref{LCPairImpliesDuBois}) inspired the following adjunction-like theorem for log canonical and Du Bois singularities; also compare with \cite{VassilevTestIdeals} and \cite{AmbroSeminormalLocus}.

\begin{theorem}
\label{MultiplierIdealsAreDuBois}
Suppose that $Y$ is a scheme of finite type over a field of characteristic zero with Kawamata log terminal singularities, and that $\sI$ is an ideal sheaf on $Y$.  If the pair $(Y, \sI^r)$ is log canonical for some positive rational number $r$, then $\mJ(\sI^r)$ cuts out a scheme with Du Bois singularities.
\end{theorem}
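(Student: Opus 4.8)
The plan is to recognize the closed subscheme $W$ cut out by $\mJ(\sI^r)$ as the image of a simple normal crossings divisor under a morphism whose derived pushforward of the structure sheaf recovers $\O_W$, so that Corollary~\ref{KovacsMapDuBoisSplitting} applies. Since $Y$ has Kawamata log terminal singularities it has rational (in particular Du Bois) singularities, so $Y$ may play the role of the ambient scheme in the machinery of Section~4. Fix a log resolution $\pi\colon \tld Y\to Y$ of $\sI$ that in particular resolves $Y$, write $\sI\O_{\tld Y}=\O_{\tld Y}(-G)$, and decompose $K_{\tld Y/Y}-rG=\sum_E a(E)E$. Because $(Y,\sI^r)$ is log canonical every $a(E)\geq -1$; let $F=\sum_{a(E)=-1}E$ be the reduced sum of the log canonical places and write $\lceil K_{\tld Y/Y}-rG\rceil = -F+B$, so that $B=\sum_{a(E)>0}\lceil a(E)\rceil E$. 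A short computation shows $B\geq 0$ is effective, $\Supp B$ is disjoint from $\Supp F$, and (after choosing the canonical divisors compatibly, so that a non-exceptional prime $E'$ has $a(E')=-r\cdot\mathrm{mult}_{E'}G\leq 0$) $B$ is $\pi$-exceptional; the same computation shows $F$ and $F+B$ are simple normal crossings, their components lying in $\mathrm{Exc}(\pi)\cup\Supp G$.

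The heart of the argument is the identification $R\pi_*\O_{\tld Y}(-F)\qis\mJ(\sI^r)$ in the derived category. By Definition~\ref{MultiplierIdealDefinition} and Local Vanishing (Theorem~\ref{MultiplierIdealVanishing}) we already know $R\pi_*\O_{\tld Y}(-F+B)\qis\mJ(\sI^r)$, concentrated in degree zero, so it is enough to show that the effective $\pi$-exceptional twist by $B$ is invisible, i.e.\ that the inclusion $\O_{\tld Y}(-F)\hookrightarrow\O_{\tld Y}(-F+B)$ becomes a quasi-isomorphism after $R\pi_*$; equivalently $R\pi_*$ of the cokernel, a sheaf supported on $B$, must vanish. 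Granting this, push the exact sequence $0\to\O_{\tld Y}(-F)\to\O_{\tld Y}\to\O_F\to 0$ forward and compare exact triangles exactly as in Proposition~\ref{KeyTriangles}: using $R\pi_*\O_{\tld Y}\qis\O_Y$ and $R\pi_*\O_{\tld Y}(-F)\qis\mJ(\sI^r)$, the cone of $\mJ(\sI^r)\to\O_Y$ gets identified with $R(\pi|_F)_*\O_F$, that is, $\O_W\qis R(\pi|_F)_*\O_F$. In particular $h^0$ of the right-hand side is a reduced sheaf, so $\mJ(\sI^r)$ is radical and $W$ is a reduced scheme; and since the simple normal crossings scheme $F$ has Du Bois singularities (by Theorem~\ref{TheoremOldDuBoisProperties}(iii) and Lemma~\ref{GluingDuBois}), applying Corollary~\ref{KovacsMapDuBoisSplitting} to $\pi|_F\colon F\to W$ shows that $W$ has Du Bois singularities.

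The main obstacle is precisely the vanishing $R\pi_*\bigl(\O_{\tld Y}(-F+B)/\O_{\tld Y}(-F)\bigr)=0$, equivalently $\pi_*\O_{\tld Y}(-F)=\mJ(\sI^r)$ together with $R^{>0}\pi_*\O_{\tld Y}(-F)=0$; this is where the log canonicity of $(Y,\sI^r)$ and the Kawamata log terminality of $Y$ must really be used, since $-F$ (unlike $\lceil K_{\tld Y/Y}-rG\rceil$) is not of the shape to which Local Vanishing applies directly. I would attack it in one of two ways. First, one can try to push the resolution far enough, in the spirit of Theorem~\ref{DoubleLogResolution} and Lemma~\ref{HowardThompsonSuggestion}, that no exceptional divisor of positive discrepancy is created over the non-klt locus (so that $B=0$ outright); then $-F=\lceil K_{\tld Y/Y}-rG\rceil$ and Local Vanishing gives the claim with nothing further to check. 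Second, and more robustly, one rewrites $-F=K_{\tld Y}+N$ with $N=-K_{\tld Y/Y}-F-\pi^*K_Y$, absorbs the $\pi$-trivial divisor $\pi^*K_Y$ and the $\pi$-semiample divisor $-rG$, and applies relative Kawamata--Viehweg vanishing (cf.\ \cite[9.4]{LazarsfeldPositivity2}); the point is that $F$ collecting exactly the $-1$-places, rather than any place of discrepancy $<-1$, is what leaves a $\pi$-nef and $\pi$-big remainder after rounding. A minor bookkeeping issue, since $Y$ is klt rather than smooth, is to phrase ``log resolution of $\sI$ in $Y$'' as a resolution of $Y$ that is also a log resolution of $\sI\O$ and to keep track of $K_{\tld Y/Y}$ accordingly; the divisorial case $\mathrm{codim}_Y W=1$ recovers Takagi's inversion-of-adjunction statement (see~\ref{LCPairImpliesDuBois} and \cite{TakagiInversion}) and is a useful consistency check, the new content being the higher-codimension accounting carried by the multiplier ideal.
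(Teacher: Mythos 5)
Your overall architecture (multiplier ideal, local vanishing, the triangle $\mJ(\sI^r)\to\O_Y\to\O_X$, Kov\'acs's criterion) matches the paper, but you have routed the argument through a claim the paper deliberately avoids, and that claim is exactly where your proof has a hole. You need $R\pi_*\O_{\tld Y}(-F)\qis\mJ(\sI^r)$, i.e.\ local vanishing for $-F$ rather than for $\lceil K_{\tld Y/Y}-rG\rceil$, and you concede this is ``the main obstacle'' without closing it. Neither proposed repair works as stated. For the first: refining the resolution creates \emph{more} divisors of positive discrepancy, not fewer, and you cannot in general arrange $B=0$ at all --- any exceptional divisor lying over the singular locus of the klt scheme $Y$ away from $V(\sI)$ (which must be blown up, since a strong log resolution need not exist when $Y$ is singular off $V(\sI)$), or a point blow-up of $\mathbb{A}^3$ centered on $V(\sI)$ for $\sI=(x,y)$, $r=2$, already has $a(E)>0$. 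For the second: writing $-F=K_{\tld Y}-\pi^*K_Y-rG+\Delta$ with $\Delta=-F-K_{\tld Y/Y}+rG$, the divisor $\Delta$ has coefficient $-a(E)<0$ along every exceptional divisor with $a(E)>0$, so it is not a boundary and relative Kawamata--Viehweg vanishing does not apply to $\O_{\tld Y}(-F)$; moreover $F$ appears with coefficient one, so even after fixing the negative part you would need a vanishing theorem for log canonical (not klt) pairs, which is well beyond Theorem \ref{MultiplierIdealVanishing}.

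The paper's proof never establishes, and never needs, an isomorphism $R\pi_*\O_{\tld Y}(-E)\qis\mJ(\sI^r)$. Taking $E$ to be the full reduced preimage of $X=V(\mJ(\sI^r))$ (an snc divisor, hence Du Bois), the log canonicity of $(Y,\sI^r)$ gives only the sandwich $\O_{\tld Y}(-E)\subseteq\O_{\tld Y}(\lceil K_{\tld Y/Y}-rG\rceil)$, because every $-1$ place lies over $X$ and every other coefficient of the round-up is $\geq 0$. The resulting composition $\mJ(\sI^r)\to R\pi_*\O_{\tld Y}(-E)\to R\pi_*\O_{\tld Y}(\lceil K_{\tld Y/Y}-rG\rceil)\qis\mJ(\sI^r)$ is a quasi-isomorphism by ordinary local vanishing, and this \emph{left inverse} is all that is fed into the comparison of triangles: the induced map $R\pi_*\O_E\to\O_X$ splits $\O_X\to R\pi_*\O_E$, which is precisely the hypothesis of Corollary \ref{KovacsMapDuBoisSplitting}. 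So the splitting criterion is doing essential work here --- it lets one bypass the hard vanishing statement you are trying to prove. If you insist on your stronger identification $\O_X\qis R\pi_*\O_F$, you would be proving something genuinely stronger than the theorem requires, and you would need an lc-type vanishing theorem (\`a la Ambro--Fujino) to do it; as written, the proposal does not constitute a proof.
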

\begin{proof}
Without loss of generality, we may assume that $Y$ is affine.  Let $X = V(\mJ(\sI^r))$.  Let $\pi : \tld Y \rightarrow Y$ be a simultaneous log resolution of $\sI$ and $\mJ(\sI^r)$. In particular, $\tld Y$ is smooth and the reduced pre-images of $\mJ(\sI^r)$ and $\sI$ are divisors with simple normal crossings.  Let us use $G$ to denote the divisor on $\tld Y$ such that $\sI \O_{\tld Y} = \O_{\tld Y}(-G)$ and let us use $E$ to denote $(\pi^{-1}(X))_{\red}$.  Note that $E$ has Du Bois singularities.

Since the pair is log canonical and all the divisors with discrepancy equal to $-1$ are centered over $X$, we have $\lceil K_{\tld Y / Y} - rG \rceil \geq -E$ and therefore we have an inclusion $\O_{\tld Y}(-E) \subset \O_{\tld Y}(\lceil K_{\tld Y /Y} -rG \rceil)$.  Thus we obtain the composition
\[
\xymatrix@=12pt{
\mJ(\sI^r) \ar[r] & R \pi_*( \mJ(\sI^r) \O_{\tld Y}) \ar[r] & R \pi_* \O_{\tld Y}(-E) \ar[r] & R \pi_*\O_{\tld Y}(\lceil K_{\tld Y / Y} -rG \rceil) \qis \mJ(\sI^r) \\
}
\]
where the last isomorphism is the local vanishing theorem for multiplier ideals, \cite[9.4.4]{LazarsfeldPositivity2}.
The total composition is a quasi-isomorphism and can be thought of as a splitting (technically a left inverse) of $\xymatrix{\mJ(\sI^r)\ar[r] & R \pi_* \O_{\tld Y}(-E)}$ in the derived category.  Note we can fit the composition into a larger diagram in the derived category,
\[
\xymatrix{
 \mJ(\sI^r) \ar[r] \ar[r] \ar[d] & \O_Y \ar[r] \ar@{=}[d] & \O_X \ar[d] \ar[r]^{+1} & \\
 R \pi_* \O_{\tld Y}(-E) \ar[r] \ar[d] & R \pi_* \O_{\tld Y} \ar[r] \ar@{=}[d] & R \pi_* \O_E \ar@{.>}[dd] \ar[r]^-{+1} & \\
 R \pi_* \O_{\tld Y}(\lceil K_{\tld Y / Y} - rG \rceil) \ar[r] \ar@{=}[d] & R \pi_* \O_{\tld Y}(\lceil K_{\tld Y / Y}\rceil) \ar@{=}[d] & \\
 \mJ(\sI^r) \ar[r] & \O_Y \ar[r] & \O_X \ar[r]^{+1} & \\
}
\]
where the dotted arrow exists because the derived category is a triangulated category.  However, since the left two total vertical compositions are quasi-isomorphisms, the right total vertical composition is also a quasi-isomorphism, which proves that $X$ is Du Bois by \cite[2.4]{KovacsDuBoisLC1}, or as stated earlier in \ref{KovacsMapDuBoisSplitting}.
\end{proof}

\begin{remark}
We note that if $Y$ is smooth, then we can choose $\pi$ so that the object $R \pi_* \O_E$ in the proof above is quasi-isomorphic to $\DuBois{X}$ by \ref{DoubleLogResolution}.  We expect this is always true, see \ref{QuestionLogResolutionGeneralization}.
\end{remark}

We may use this theorem to prove a corollary related to a conjecture of Takagi, \cite[4.4]{TakagiInversion}.  Also see \cite{KawakitaComparisonNonLCI}, a recent paper by Kawakita.  First, however, we need a lemma about multiplier ideals.

\begin{lemma}
\label{MultiplierIdealLemma}
Suppose that $Y$ is a smooth scheme of finite type over a field of characteristic zero and that $X \subset Y$ is a reduced closed subscheme of pure codimension $r$ with ideal sheaf $\sI_X$.  If the pair $(Y, rX)$ (or $(Y, (r)X)$) is log canonical (see \ref{SingularitiesInPairs}), then the multiplier ideal $\mJ(\sI_X^r)$ (or $\mJ(\sI_X^{(r)})$) is equal to $\sI_X$.
\end{lemma}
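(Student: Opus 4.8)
The plan is to prove the equality by establishing the two inclusions $\sI_X \subseteq \mJ(\sI_X^r)$ and $\mJ(\sI_X^r) \subseteq \sI_X$ separately, working locally on $Y$; the symbolic version $\mJ(\sI_X^{(r)}) = \sI_X$ will follow from the identical argument, so I treat both cases simultaneously. Fix a log resolution $\pi : \tld Y \to Y$ of $\sI_X$ (respectively, of $\sI_X^{(r)}$), write $\sI_X \O_{\tld Y} = \O_{\tld Y}(-G)$ (respectively, $\sI_X^{(r)} \O_{\tld Y} = \O_{\tld Y}(-G)$), and set $E = (\pi^{-1}(X))_{\red}$. Since the ideal in question becomes invertible on $\tld Y$, its zero locus $\pi^{-1}(X) = \Supp G$ is a divisor, so $E = G_{\red}$ is a reduced divisor, and it does not matter which of the two ideals we resolve because $\sqrt{\sI_X^{(r)}} = \sI_X$. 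By Definition~\ref{MultiplierIdealDefinition} we have $\mJ(\sI_X^r) = \pi_* \O_{\tld Y}(\lceil K_{\tld Y/Y} - rG \rceil)$ and $\mJ(\sI_X^{(r)}) = \pi_* \O_{\tld Y}(K_{\tld Y/Y} - G)$.

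For $\sI_X \subseteq \mJ$ I would use log canonicity directly on this resolution. That $(Y, rX)$ (respectively, $(Y, (r)X)$) is log canonical says precisely that every coefficient of $K_{\tld Y/Y} - rG$ (respectively, of $K_{\tld Y/Y} - G$) is $\geq -1$; moreover, along a prime divisor of $\tld Y$ whose image is not contained in $X$ this coefficient is in fact $\geq 0$, since such a divisor is either $\pi$-exceptional or the strict transform of a divisor on the smooth scheme $Y$ (so it has nonnegative coefficient in $K_{\tld Y/Y}$) and $G$ does not contain it. Hence the divisors with coefficient $-1$ all lie over $X$, so $\lceil K_{\tld Y/Y} - rG \rceil \geq -E$ and therefore $\mJ \supseteq \pi_* \O_{\tld Y}(-E) = \pi_* \sI_E$. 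Finally, a local section $f$ of $\O_Y$ lies in $\pi_* \sI_E$ if and only if $\pi^* f$ vanishes along the reduced divisor $E$, i.e. if and only if $f$ vanishes set-theoretically along $X = \pi(E)$, i.e. if and only if $f \in \sqrt{\sI_X} = \sI_X$; so $\pi_* \sI_E = \sI_X$ and the first inclusion follows.

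For the harder inclusion $\mJ \subseteq \sI_X$ I would argue component by component. Let $X_1, \dots, X_m$ be the irreducible components of $X$, with generic points $\eta_j$. Since $Y$ is smooth and $X$ is reduced of pure codimension $r$, each $\O_{Y, \eta_j}$ is a regular local ring of dimension $r$ whose maximal ideal is $\mathfrak m_j = \sI_X \O_{Y,\eta_j} = \sI_{X_j} \O_{Y,\eta_j}$. Let $v_j$ be the divisorial valuation of the function field of $Y$ given by $\mathfrak m_j$-adic order, obtained by blowing up the closed point of $\Spec \O_{Y,\eta_j}$; its center on $Y$ is $X_j$, its discrepancy over $Y$ with no boundary is $a(v_j, Y, 0) = r - 1$, and $v_j(\sI_X) = 1$ while $v_j(\sI_X^{(r)}) = v_j(\mathfrak m_j^r) = r$, using that the $r$-th symbolic power of a maximal ideal of a regular local ring is its ordinary $r$-th power. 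Consequently
\[
a(v_j, Y, rX) = (r-1) - r \cdot v_j(\sI_X) = -1, \qquad a(v_j, Y, (r)X) = (r-1) - v_j(\sI_X^{(r)}) = -1 .
\]
Replacing $\pi$ by a higher log resolution if necessary — which changes neither $\mJ$ nor anything above — I may assume the center of $v_j$ is a prime divisor $E_j'$ on $\tld Y$; then any local section $f$ of $\mJ(\sI_X^r)$ satisfies $v_j(f) \geq -\lceil a(v_j, Y, rX) \rceil = 1$, so $f \in \mathfrak m_j \cap \O_Y = \sI_{X_j}$. Letting $j$ vary gives $f \in \bigcap_j \sI_{X_j} = \sI_X$, and the symbolic case is identical; combining the two inclusions proves the lemma. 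The routine inputs — that $\mJ$ is independent of the chosen log resolution, that membership in $\mJ$ can be tested against the discrepancy along any single divisorial valuation, and that a blow-up of a regular local ring of dimension $r$ has discrepancy $r-1$ — are all standard \cite[9]{LazarsfeldPositivity2}, so the one place that needs genuine care is this last step: producing the valuations $v_j$ from the generic points of the components, verifying $v_j(\sI_X^{(r)}) = r$ through the localization $\sI_X^{(r)} \O_{Y,\eta_j} = \mathfrak m_j^r$, and observing that it is precisely the pure-codimension hypothesis that forces each of these discrepancies to equal exactly $-1$.
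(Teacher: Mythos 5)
Your proof is correct and, despite being far more detailed, rests on exactly the two ingredients of the paper's two-line proof: log canonicity forces $\lceil K_{\tld Y/Y} - rG\rceil \geq -E$ (equivalently, that $\mJ$ is radical and contains $\sI_X$), while the discrepancy being exactly $-1$ along the valuations at the generic points of the components --- which is where pure codimension enters --- forces $\mJ \subseteq \sI_X$. The paper phrases this as ``the multiplier ideal is reduced and has the right support,'' but the underlying argument is the same.
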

\begin{proof}
The associated multiplier ideal is reduced since the pair is log canonical, it has the right support since the multiplicity of $rX$ (respectively $(r)X$) along the generic points of $X$, is equal to the codimension.
\end{proof}

\begin{corollary}
\label{LCPairImpliesDuBois}
Let $(Y, X)$ be a pair where $Y$ is a smooth scheme of finite type over a field of characteristic zero and $X$ is a reduced subscheme of pure codimension $r$.  If the pair $(Y, rX)$ (or $(Y, (r)X)$) is log canonical, then $X$ has Du Bois singularities.
\end{corollary}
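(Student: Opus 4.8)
The plan is to deduce this corollary immediately from the two results just established, Lemma~\ref{MultiplierIdealLemma} and Theorem~\ref{MultiplierIdealsAreDuBois}. First I would note that a smooth scheme $Y$ automatically has Kawamata log terminal singularities — every divisor over $Y$ has positive discrepancy — so the ambient hypothesis of Theorem~\ref{MultiplierIdealsAreDuBois} is free. Writing $\sI_X$ for the ideal sheaf of $X$ in $Y$, the hypothesis that $(Y,rX)$ is log canonical is, by the conventions of Definition~\ref{SingularitiesInPairs}, exactly the statement that the pair $(Y,\sI_X^r)$ is log canonical in the sense of Theorem~\ref{MultiplierIdealsAreDuBois}, with the positive rational number being $r$ itself.

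Then I would invoke Lemma~\ref{MultiplierIdealLemma}: since $(Y,rX)$ is log canonical and $X$ has pure codimension $r$, we have $\mJ(\sI_X^r)=\sI_X$, so the scheme $V(\mJ(\sI_X^r))$ cut out by the multiplier ideal is precisely $X$. Now Theorem~\ref{MultiplierIdealsAreDuBois} asserts that $V(\mJ(\sI_X^r))$ has Du Bois singularities, and therefore so does $X$. The symbolic-power variant is handled in the same way after replacing $\sI_X^r$ by $\sI_X^{(r)}$: the pair $(Y,(r)X)$ being log canonical is by definition the pair $(Y,(\sI_X^{(r)})^1)$ being log canonical, so Theorem~\ref{MultiplierIdealsAreDuBois} applies with ideal sheaf $\sI_X^{(r)}$ and exponent $1$, while Lemma~\ref{MultiplierIdealLemma} again gives $\mJ(\sI_X^{(r)})=\sI_X$, so $V(\mJ(\sI_X^{(r)}))=X$ is Du Bois.

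I do not expect any genuinely hard step here, since all the substance already lives in Lemma~\ref{MultiplierIdealLemma} and Theorem~\ref{MultiplierIdealsAreDuBois}. The only point that takes a moment of care is bookkeeping of conventions: confirming that ``$(Y,rX)$ log canonical'' as in Definition~\ref{SingularitiesInPairs} is the same notion of log canonical pair used in the proof of Theorem~\ref{MultiplierIdealsAreDuBois}, and, in the symbolic case, being careful to apply that theorem with exponent $1$ on the ideal $\sI_X^{(r)}$ rather than exponent $r$ on $\sI_X$.
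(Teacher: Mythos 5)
Your proposal is correct and is precisely the argument the paper intends: the corollary is stated without proof immediately after Lemma~\ref{MultiplierIdealLemma} precisely because it follows by combining that lemma (which identifies $\mJ(\sI_X^r)$, resp.\ $\mJ(\sI_X^{(r)})$, with $\sI_X$) with Theorem~\ref{MultiplierIdealsAreDuBois} applied to the klt ambient smooth scheme $Y$. Your bookkeeping of the conventions, including applying the theorem with exponent $1$ on $\sI_X^{(r)}$ in the symbolic case, is exactly right.
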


It is not difficult to construct examples that show that the converse of this corollary is false in general.  Consider the subscheme defined by $I = (uv, uz, z(v-y^2))$ inside $\Spec k[u,v,y,z]$.  One can verify that $(\Spec k[u,v,y,z], V(I^{(2)}))$ is not log canonical by performing a log resolution and it is easy to see that $V(I)$ has Du Bois singularities using \ref{GluingDuBois}.  Note this example also occurs in a related situation in \cite[3.2]{SinghDeformationOfFPurity}, also see \cite{FedderFPureRat}.

\begin{remark}
A similar statement to \ref{LCPairImpliesDuBois} can be made in the non-equidimensional case assuming the ideal vanishes to the order of the codimension along each irreducible component.
\end{remark}

We can also use the corollary to prove that log canonical local complete intersections are Du Bois.

\begin{corollary}
\label{LCHypersurfaceImpliesDuBois}
Suppose that $X$ is a normal local complete intersection.  Then $X$ has Du Bois singularities if and only if it has log canonical singularities.
\end{corollary}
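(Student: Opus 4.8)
The plan is to prove the two implications separately; in each case we embed $X$ locally into a smooth scheme and feed the embedding into one adjunction-type result already available to us.

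For the direction ``Du Bois $\Rightarrow$ log canonical'': a normal local complete intersection is Gorenstein, hence in particular normal and quasi-Gorenstein, so this implication is immediate from the fact that normal quasi-Gorenstein Du Bois singularities are log canonical, \cite[3.6]{KovacsDuBoisLC1} (recalled in the introduction).

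For the converse, ``log canonical $\Rightarrow$ Du Bois'': since being Du Bois and being a local complete intersection are both local conditions that can be checked on each connected component, and a normal scheme is a disjoint union of irreducible normal schemes, we may assume that $X$ is embedded as a complete intersection of pure codimension $r$ in a smooth affine scheme $Y$. The one nontrivial ingredient is inversion of adjunction for local complete intersection varieties, due to Ein and Musta\c{t}\v{a}: because $X$ is a local complete intersection with log canonical singularities, the pair $(Y, rX)$ is log canonical. Granting this, Corollary \ref{LCPairImpliesDuBois} applies directly and shows that $X$ has Du Bois singularities.

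The main obstacle is exactly this last step, the passage from intrinsic log canonicity of $X$ to log canonicity of the pair $(Y, rX)$. When $X$ is a hypersurface it is the classical fact that a log resolution of $Y$ adapted to $X$ restricts to a resolution of $X$ along which the discrepancies agree (via the adjunction formula $K_X = (K_Y + X)|_X$); in codimension $\ge 2$ the local complete intersection hypothesis becomes essential and one genuinely needs the Ein--Musta\c{t}\v{a} theorem. Everything else is bookkeeping: that the localizations and componentwise reductions are legitimate for Du Bois singularities (Theorem \ref{TheoremOldDuBoisProperties}(i)), that ``$X$ reduced of pure codimension $r$ with $(Y,rX)$ log canonical'' is precisely the hypothesis of Corollary \ref{LCPairImpliesDuBois}, and that normality of $X$ supplies the quasi-Gorenstein hypothesis needed for the other implication.
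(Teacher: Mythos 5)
Your proof is correct and follows essentially the same route as the paper: the forward implication via the fact that normal quasi-Gorenstein (here Gorenstein) Du Bois singularities are log canonical \cite[3.6]{KovacsDuBoisLC1}, and the converse by localizing, embedding $X$ as a complete intersection in a smooth $Y$, invoking Ein--Musta\c{t}\v{a} \cite[3.2]{EinMustata} to see that $X$ log canonical is equivalent to $(Y, rX)$ log canonical, and then applying Corollary \ref{LCPairImpliesDuBois}. No gaps.
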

\begin{proof}
First assume that $X$ has log canonical singularities.  The statement is local, so we assume $X$ is affine and that $X$ embeds in a smooth scheme $Y$ as a complete intersection (note in particular it is Gorenstein).  The condition that $X$ is log canonical is then equivalent to the condition that $(Y, rX)$ is log canonical by \cite[3.2]{EinMustata}.  Apply \ref{LCPairImpliesDuBois} to obtain one direction of the equivalence.  The converse is just \cite[3.6]{KovacsDuBoisLC1}.
\end{proof}

A technique similar to that used in \ref{MultiplierIdealsAreDuBois} can also be used to prove a special case of Kawamata's subadjunction theorem; see \cite{KawamataSubadjunction2}.  Instead of using the ``splitting'' for Du Bois singularities used above, we use the analogous theorem for rational singularities stated previously in \ref{KovacsRationalCriterion}.

\begin{theorem}\cite{KawamataSubadjunction2}
\label{MultiplierIdealsAreRational}
Suppose that $Y$ is Kawamata log terminal, $\sI$ is an ideal sheaf, $r$ is a positive rational number, and the pair $(Y, \sI^r)$ is log canonical.  Further suppose that the pair has a log resolution $\pi : \tld Y \rightarrow Y$ that \emph{only} achieves a discrepancy of $-1$ along a set of divisors dominating irreducible components of $V(\mJ(\sI^r))$ and such that these $-1$ divisors are disjoint in $\tld Y$.  Then $X = V(\mJ(\sI^r))$ has rational singularities.
\end{theorem}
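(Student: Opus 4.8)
The plan is to mimic the proof of Theorem~\ref{MultiplierIdealsAreDuBois}, replacing the Du~Bois splitting criterion \ref{KovacsMapDuBoisSplitting} with its rational-singularities analogue \ref{KovacsRationalCriterion}. As before, the statement is local on $Y$, so I would assume $Y=\Spec A$ is affine. Write $X=V(\mJ(\sI^r))$, let $G$ be the divisor with $\sI\O_{\tld Y}=\O_{\tld Y}(-G)$, and let $E=(\pi^{-1}(X))_{\red}$. The extra hypothesis is the crucial one: the only divisors on $\tld Y$ with discrepancy $-1$ are disjoint prime divisors $E_1,\dots,E_s$, each dominating an irreducible component of $X$. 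In particular $E=E_1\cup\dots\cup E_s$ \emph{is itself a disjoint union of smooth prime divisors}, hence $E$ is smooth, so $\tld E := E$ (with its reduced structure) has rational singularities and every component of $E$ dominates a component of $X$. This is exactly what is needed to invoke \ref{KovacsRationalCriterion} (in the form of Remark~\ref{KovacsRationalIrreducibleRemark}) once we produce a left inverse to $\O_X\rightarrow R\pi_*\O_E$.

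**The key steps**, in order. First, since the pair is log canonical and all discrepancy-$(-1)$ divisors are centered over $X$, we have $\lceil K_{\tld Y/Y}-rG\rceil \geq -E$, giving an inclusion $\O_{\tld Y}(-E)\subset\O_{\tld Y}(\lceil K_{\tld Y/Y}-rG\rceil)$ and hence a chain of maps
\[
\mJ(\sI^r)\rightarrow R\pi_*\O_{\tld Y}(-E)\rightarrow R\pi_*\O_{\tld Y}(\lceil K_{\tld Y/Y}-rG\rceil)\qis \mJ(\sI^r),
\]
the final quasi-isomorphism being local vanishing \ref{MultiplierIdealVanishing} together with $\pi_*\O_{\tld Y}(\lceil K_{\tld Y/Y}-rG\rceil)=\mJ(\sI^r)$. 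The composite is a quasi-isomorphism, i.e.\ a left inverse of $\mJ(\sI^r)\rightarrow R\pi_*\O_{\tld Y}(-E)$ in the derived category. Second, I would assemble the commutative ladder of exact triangles exactly as in the proof of \ref{MultiplierIdealsAreDuBois}: the rows are the short exact sequences $0\to\O_{\tld Y}(-E)\to\O_{\tld Y}\to\O_E\to 0$ and $0\to\mJ(\sI^r)\to\O_Y\to\O_X\to 0$ (and the intermediate one with $\lceil K_{\tld Y/Y}-rG\rceil$), using that $Y$ being klt has rational singularities so $\O_Y\qis R\pi_*\O_{\tld Y}$. The octahedral axiom (as in \ref{TriangleSumToEqualCompletions}) supplies a dotted arrow $\O_X\to R\pi_*\O_E$ compatible with both vertical composites, and since the two composites $\mJ(\sI^r)\to\mJ(\sI^r)$ and $\O_Y\to\O_Y$ are the identity quasi-isomorphisms, the third composite $\O_X\to R\pi_*\O_E\to\O_X$ is a quasi-isomorphism too. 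Third, I would apply \ref{KovacsRationalCriterion} via Remark~\ref{KovacsRationalIrreducibleRemark} to the map $\pi|_E:E\to X$: $E$ has rational (indeed smooth) singularities, every component of $E$ dominates a component of $X$, and $\O_X\to R\pi_*\O_E$ has a left inverse; therefore $X$ has rational singularities.

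**The main obstacle** I anticipate is verifying that the disjointness-and-dominance hypothesis really forces $R\pi_*\O_E$ to play the role that $R\pi_*\O_{\tld X}$ plays in the statement of \ref{KovacsRationalCriterion}; concretely, one must be careful that $\pi|_E : E\to X$ is genuinely a \emph{resolution-like} map with $E$ smooth and surjective onto each component, which is where the "$-1$ divisors are disjoint" clause is doing real work (without it $E$ has normal-crossing self-intersections and is not smooth, breaking the application). A secondary technical point is the bookkeeping of signs and of which intermediate triangle to insert when forming the ladder, but this is identical to the computation already carried out in \ref{MultiplierIdealsAreDuBois} and in Lemma~\ref{TriangleSumToEqualCompletions}, so I would simply cite those. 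I do not expect vanishing or duality issues beyond what \ref{MultiplierIdealVanishing} and \ref{KovacsRationalCriterion} already package.
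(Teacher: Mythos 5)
Your overall strategy is exactly the paper's: build the left inverse out of the inclusion into $\O_{\tld Y}(\lceil K_{\tld Y/Y}-rG\rceil)$ and local vanishing, run it through the ladder of triangles as in Theorem \ref{MultiplierIdealsAreDuBois}, and finish with Theorem \ref{KovacsRationalCriterion} and Remark \ref{KovacsRationalIrreducibleRemark}. But there is a genuine gap in your identification of the object you push forward. You set $E=(\pi^{-1}(X))_{\red}$ and then assert that $E=E_1\cup\dots\cup E_s$ is the disjoint union of the discrepancy-$(-1)$ divisors. That does not follow from the hypotheses: the reduced pre-image of $X$ in a log resolution will in general contain additional exceptional components of discrepancy $>-1$ lying over $X$ (for instance, blow up $\mathbb{A}^2$ at the origin and then again at a point of the exceptional curve, with $\sI=(x,y)$ and $r=2$; the second exceptional divisor has discrepancy $0$ but still sits inside $\pi^{-1}(X)$). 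So $E$ as you defined it is typically a simple normal crossings divisor with genuine crossings, hence not smooth and not even normal, and some of its components may fail to dominate any component of $X$. Both defects are fatal for the final step: Theorem \ref{KovacsRationalCriterion} requires the source to have rational singularities, and Remark \ref{KovacsRationalIrreducibleRemark} requires every component to dominate a component of $X$. (This is precisely why the analogous argument in Theorem \ref{MultiplierIdealsAreDuBois} only yields Du Bois singularities: snc divisors are Du Bois but not rational.)

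The fix is what the paper actually does: work throughout with $E'$, the divisor consisting of \emph{only} the discrepancy-$(-1)$ components, rather than the full reduced pre-image. The hypotheses guarantee $E'$ is a disjoint union of smooth prime divisors, each dominating a component of $X$. All of your intermediate steps go through verbatim with $E'$ in place of $E$: one still has $\mJ(\sI^r)\O_{\tld Y}\subset\O_{\tld Y}(-E')$ because $E'$ lies over $X$, and $\O_{\tld Y}(-E')\subset\O_{\tld Y}(\lceil K_{\tld Y/Y}-rG\rceil)$ because the only components whose coefficient fails to round up to something $\geq 0$ are exactly the components of $E'$. The ladder then produces a left inverse to $\O_X\rightarrow R\pi_*\O_{E'}$, and Theorem \ref{KovacsRationalCriterion} applies to $E'\rightarrow X$ without further ado.
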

\begin{proof}
Let $E'$ denote the divisor on $\tld Y$ made up of components with a discrepancy of -1.  Let us use $G$ to denote the divisor on $\tld Y$ such that $\sI^r \O_{\tld Y} = \O_{\tld Y}(-G)$.  The hypotheses imply that $E'$ is smooth and that $\O_{\tld Y}(-E') \subset \O_{\tld Y}(\lceil K_{\tld Y / Y} - rG\rceil)$.  We thus obtain the following composition
\[
\xymatrix@=12pt{
\mJ(\sI^r) \ar[r] & R \pi_* \mJ(\sI^r) \O_{\tld Y} \ar[r] & R \pi_* \O_{\tld Y}(-E') \ar[r] & R \pi_*( \O_{\tld Y}(\lceil K_{\tld Y / Y} -rG \rceil)) \qis \mJ(\sI^r) \\
}
\]
as before.  We now mirror the argument found in \ref{MultiplierIdealsAreDuBois} by fitting this ``splitting'' of the map $\mJ(\sI^r) \rightarrow R \pi_* \O_{\tld Y}(-E')$ into a larger diagram,
\[
\xymatrix{
 \mJ(\sI^r) \ar[r] \ar[r] \ar[d] & \O_Y \ar[r] \ar@{=}[d] & \O_X \ar[d] \ar[r]^{+1} & \\
 R \pi_* \O_{\tld Y}(-E') \ar[r] \ar[d] & R \pi_* \O_{\tld Y} \ar[r] \ar@{=}[d] & R \pi_* \O_{E'} \ar@{.>}[dd] \ar[r]^-{+1} & \\
 R \pi_* \O_{\tld Y}(\lceil K_{\tld Y / Y} - rG\rceil) \ar[r] \ar@{=}[d] & R \pi_* \O_{\tld Y}(\lceil K_{\tld Y / Y} \rceil) \ar@{=}[d] & \\
 \mJ(\sI^r) \ar[r] & \O_Y \ar[r] & \O_X \ar[r]^{+1} & .\\
}
\]
We then apply \ref{KovacsRationalCriterion} and \ref{KovacsRationalIrreducibleRemark} to complete the proof.
\end{proof}

\begin{corollary}
\label{pltImpliesRat}
Suppose $(Y, X)$ is a pair where $Y$ is a smooth scheme of finite type over a field of characteristic zero and $X$ is a reduced subscheme of pure codimension $r$.  If the pair $(Y, rX)$ (or $(Y, (r)X)$) is purely log terminal (see \ref{SingularitiesInPairs}), then $X$ has rational singularities.
\end{corollary}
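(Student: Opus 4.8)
The plan is to deduce this from Theorem~\ref{MultiplierIdealsAreRational}, in the same spirit that Corollary~\ref{LCPairImpliesDuBois} was deduced from Theorem~\ref{MultiplierIdealsAreDuBois}.  Since every purely log terminal pair is log canonical, Lemma~\ref{MultiplierIdealLemma} applies and gives $\mJ(\sI_X^r) = \sI_X$ (respectively $\mJ(\sI_X^{(r)}) = \sI_X$); in particular $V(\mJ(\sI_X^r)) = X$.  It therefore suffices to produce a log resolution $\pi : \tld Y \to Y$ of $\sI_X$ such that every prime divisor of discrepancy $-1$ with respect to $(Y, rX)$ that appears on $\tld Y$ dominates an irreducible component of $X$, and such that these $-1$ divisors are pairwise disjoint in $\tld Y$; Theorem~\ref{MultiplierIdealsAreRational} then immediately yields that $X$ has rational singularities.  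I would carry out the argument for $(Y,rX)$ and remark that the case $(Y,(r)X)$ is entirely analogous.

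The first required property is immediate from the definition of purely log terminal: every divisor then has discrepancy at least $-1$, and a divisor can have discrepancy exactly $-1$ only if it dominates a component of $X$.  The disjointness of the $-1$ divisors is the step I expect to be the main obstacle, and I would prove it by contradiction.  Take any log resolution $\pi$ of $\sI_X$, write $K_{\tld Y/Y} - rG = \sum_E a(E,Y,rX)\,E$ where $\sI_X\O_{\tld Y} = \O_{\tld Y}(-G)$, and suppose $E_1 \neq E_2$ are prime divisors on $\tld Y$ with $a(E_1,Y,rX) = a(E_2,Y,rX) = -1$ and $E_1 \cap E_2 \neq \emptyset$.  Localizing at the generic point of a component $X_i$ of $X$ presents the pair as the spectrum of a regular local ring of dimension $r$ together with $r$ times its closed point, for which the only divisor of discrepancy $-1$ is the blow-up of that point; consequently $E_1$ and $E_2$ dominate distinct components $X_1 \neq X_2$, and $X_1 \cap X_2$, being properly contained in each, is not a component of $X$.

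Finally, since $\tld Y$ carries a simple normal crossings divisor containing the support of $K_{\tld Y/Y} - rG$, the locus $E_1 \cap E_2$ is smooth of pure codimension $2$, and any component $W$ of it is contained in no other divisor in that support.  The standard formula for the discrepancy of a blow-up at a smooth codimension-$2$ center then gives, for the exceptional divisor $F$ of $\mathrm{Bl}_W \tld Y \to \tld Y$, the value $a(F,Y,rX) = 1 + a(E_1,Y,rX) + a(E_2,Y,rX) = -1$.  But $F$ dominates $\pi(W) \subseteq \pi(E_1) \cap \pi(E_2) = X_1 \cap X_2$ (the images are closed because $\pi$ is proper), which is not a component of $X$; this contradicts that $(Y,rX)$ is purely log terminal.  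Hence the $-1$ divisors on $\tld Y$ are pairwise disjoint, Theorem~\ref{MultiplierIdealsAreRational} applies, and $X$ has rational singularities.  The only routine facts I am suppressing are the discrepancy-of-a-blow-up computation and the local uniqueness, at the generic point of a component of $X$, of the divisor of discrepancy $-1$.
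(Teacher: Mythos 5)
Your reduction is the same as the paper's: both proofs invoke Lemma~\ref{MultiplierIdealLemma} to identify $V(\mJ(\sI_X^r))$ with $X$ and then verify the hypotheses of Theorem~\ref{MultiplierIdealsAreRational}, with pure log terminality supplying the condition that every discrepancy-$(-1)$ divisor dominates a component of $X$. Where you genuinely diverge is the disjointness of the $-1$ divisors. The paper does not prove this for an arbitrary log resolution; instead it \emph{builds} a resolution with the property, by first blowing up so that each component of $X$ is dominated by exactly one exceptional divisor (citing \cite[7.3.(iii)]{BravoEncinasVillmayorSimplified}) and then observing that the algorithmic resolution of $\sI\O_{Y_1}$ only introduces centers lying over proper closed subsets of $X$, so no new dominating divisors appear and the dominating ones stay disjoint. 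You instead prove the stronger statement that on \emph{every} log resolution the $-1$ divisors are pairwise disjoint, by blowing up a component $W$ of $E_1\cap E_2$ and computing $a(F,Y,rX)=1+a(E_1)+a(E_2)=-1$ while $F$ is centered over $X_1\cap X_2$, contradicting plt. That computation is correct (for SNC a codimension-two component of $E_1\cap E_2$ cannot lie on a third component of the support, so the formula of \cite[2.29--2.31]{KollarMori} applies verbatim), and the contradiction is genuine since no irreducible component of $X$ can be contained in $X_1\cap X_2$. Your approach buys independence from the fine structure of resolution algorithms and a cleaner, resolution-independent statement; its cost is the auxiliary fact you flag as routine, that over the generic point of a component of $X$ the pair $(\Spec\O_{Y,\eta},\,\mathfrak{m}^r)$ admits a unique divisorial valuation of discrepancy $-1$ (needed to rule out two $-1$ divisors dominating the \emph{same} component, where your blow-up trick gives no contradiction). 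That fact is true --- it follows from $k_F+1\ge\sum_i \mathrm{ord}_F(x_i)\ge r\cdot\mathrm{ord}_F(\mathfrak{m})$, with equality forcing $F$ to be an lc place of the SNC pair $(\mathbb{A}^r,\sum\{x_i=0\})$ with all $\mathrm{ord}_F(x_i)$ equal, hence the order valuation --- but it is the one step in your write-up that deserves an actual proof rather than the label ``routine''; the paper's constructive route avoids needing it at all.
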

\begin{proof}
Let $\sI_X$ be the ideal sheaf of $X$ and let $\sI$ be the ideal sheaf $\sI_X^r$ of $rX$ (or $\sI_X^{(r)}$ of $(r)X$) in $Y$.  We note that as before, $\mJ(\sI) = \sI_X$.  Thus the only issue is proving that such a pair satisfies the conditions of \ref{MultiplierIdealsAreRational}.

We let $\pi : \tld Y \rightarrow Y$ be a log resolution of $\sI_X$ (that is simultaneously a log resolution of $\sI_X^{(r)}$ if applicable).  Let $E$ be the reduced exceptional divisor of $\pi$.  Let $E'$ be the sub-divisor of $E$ made up of components of $E$ that dominate components of $X$.  We claim that we may assume that $E'$ is in fact smooth and each divisor of $E'$ is obtained from blowing up the generic point of an irreducible component of $X$.  To see this claim, first perform a log resolution $Y_1 \rightarrow Y$ of $\sI_X$ with this property; see \cite[7.3.(iii)]{BravoEncinasVillmayorSimplified}.  This is a log resolution of $\sI$ generically.  Therefore, if we now follow a modern algorithm for a log resolution of $\sI \O_{Y_1}$ (for example \cite{BravoEncinasVillmayorSimplified}, or \cite{WlodarczykResolution}), it is easy to see that all future blow-ups will be centered over a lower codimension subscheme of $X$.

In summary, we may assume that each component of $X$ has exactly one component of $E$ dominating it and these components are disjoint.  At this point the definition of purely log terminal allows us to apply Theorem \ref{MultiplierIdealsAreRational}, which completes the proof.
\end{proof}

\section{Further Questions}
\label{Questions}

Let $Y$ be an ambient space with rational singularities and let $X$ be a subscheme that we are trying to determine whether or not $\O_X \qis \DuBois{X}$.  The main limitation of the characterizations of Du Bois singularities contained in this paper is that, in most cases, we cannot modify $Y$ outside of $X$.  In many applications this is not optimal.  Therefore we have the following question:

\begin{question}
\label{QuestionLogResolutionGeneralization}
Suppose that $Y$ has rational singularities and $X \subset Y$.  Let $\pi : \tld Y \rightarrow Y$ be \emph{any} log resolution of the pair $(Y, X)$.  Let $E$ denote the reduced scheme with support $\pi^{-1}(X)$.  Is it true that $R \pi_* \O_E \qis \DuBois{X}$?
\end{question}

There is another direction one can explore.  Is there a way to abstract the properties of $E$ which determine $\DuBois{X}$ without the need for an embedding.  Specifically,

\begin{question}
Given a reduced scheme $X$ of finite type over a field of characteristic zero, can one specify properties of a map $\pi : E
\rightarrow X$ (without reference to an embedding of $X$) which guarantee that $R \pi_* \O_E {\qis} \DuBois{X}$? And, given such
a definition, can one show that reasonable additional requirements (such as whether any two such maps can be dominated by a
third) are met?
\end{question}

Some possible conditions on $\pi$ include the following.
For each $x \in X$, let $\pi^{-1}(x)$ denote the reduced fiber.
\begin{itemize}
\item[(i)]  $\pi$ should be a proper surjective map (with connected fibers).
\item[(ii)]  $k(x) \qis R (\pi |_{\pi^{-1}(x)})_* \O_{\pi^{-1}(x)}$.
\item[(iii)]  Some sort of requirement on the singularities of $\pi^{-1}(x)$ or perhaps a requirement on the singularities of $E$ (simple normal crossings might be reasonable).
\end{itemize}

The following related question, which can be thought of as a strengthening of a conjecture of Koll\'ar, \cite[Chapter 12]{KollarShafarevich}, would also be useful in certain applications.

\begin{question}
\label{LogRationalificationsQuestion}
Suppose $Y$ is a reduced separated scheme of finite type over a field of characteristic zero, $X \subset Y$ is a reduced closed subscheme, and $Y-X$ has rational singularities.  Does there exist a proper birational map $\pi$ from a scheme $\tld Y$ to $Y$ such that
\begin{itemize}
\item[(i)]  $\tld Y$ has rational singularities,
\item[(ii)]  $\pi$ is an isomorphism outside of $X$, and
\item[(iii)]  the reduced scheme corresponding to $\pi^{-1}(X)$ has Du Bois singularities?
\end{itemize}
\end{question}

It is also reasonable to ask whether the consequence of the adjunction-type theorem of \ref{MultiplierIdealsAreDuBois} can be strengthened; see \cite{VassilevTestIdeals} and \cite{TakagiInversion}.  Specifically,

\begin{question}
Is there a generalization of semi-log-canonical singularities (see \cite{KollarFlipsAndAbundance}) such that if $Y$ is Kawamata log terminal, $\sI$ an ideal sheaf, $r$ a positive rational number and $(Y, \sI^r)$ is log canonical, then is $V(\mJ(\sI^r))$ semi-log canonical?
\end{question}

A generalization of semi-log-canonical is truly needed here.  Even beyond the $\bQ$-Gorenstein hypothesis, semi-log canonical singularities are typically assumed to be S2, have simple normal double crossings in codimension one, and be equidimensional.  One can construct examples where the pair $(Y, \sI^r)$ is log canonical but $V(\mJ(\sI^r))$ is neither S2, nor has simple normal double crossings in codimension one, nor is equidimensional.

Finally, we should mention the possible idea of something like a converse to \ref{LCPairImpliesDuBois}.

\begin{question}
Suppose that $Y$ is smooth and $X \subset Y$ is log canonical (or an appropriate generalization of semi-log canonical) of pure codimension $r$.  Is it true that the pair $(Y, (r)X)$ is log canonical?
\end{question}

An affirmative answer to this question would imply that log canonical singularities are Du Bois by \ref{LCPairImpliesDuBois}, and likely have other applications as well.





\providecommand{\bysame}{\leavevmode\hbox to3em{\hrulefill}\thinspace}
\providecommand{\MR}{\relax\ifhmode\unskip\space\fi MR}
\providecommand{\MRhref}[2]{%
  \href{http://www.ams.org/mathscinet-getitem?mr=#1}{#2}
}
\providecommand{\href}[2]{#2}

\end{document}